\newtheorem{defi}{Definition}
\newtheorem{thm}{Theorem}
\newtheorem{prop}{Proposition}
\newtheorem{lem}{Lemma}
\newtheorem{rem}{Remark}
\newcommand{\conv}[1]{\textrm{conv}}
\newcommand{\var}[1]{\textrm{Var}}
\newcommand{\E}{{\mathbb E}}
\renewcommand{\P}{{\mathbb P}}
\title{Random symmetrizations of convex bodies}
\author{D. Coupier, Yu. Davydov\\
{\small  Universit\'e Lille 1, Laboratoire Paul
Painlev\'e}
}
\begin{document}
\maketitle

\begin{abstract}
In this paper, the asymptotic behavior of sequences of successive Steiner and Minkowski symmetrizations is investigated. We state an equivalence result between the convergences of those sequences for Minkowski and Steiner. Moreover, in the case of independent (and not necessarily identically distributed) directions, we prove the almost sure convergence of successive symmetrizations at rate exponential for Minkowski, and at rate $e^{-c\sqrt{n}}$, with $c>0$, for Steiner.

\end{abstract}

\noindent Keywords: Stochastic geometry, Convex geometry, Steiner and Minkowski symmetrizations, limit shape.\\
\noindent AMS Classification: 60D05, 52A22

\section{Introduction}

Let $A$ be a convex body of $\mathbb{R}^d$, i.e. a convex compact set with nonempty interior, and $u\in\mathbb{S}^{d-1}$ be a unit vector. The set $A$ can be considered as a family of line segments parallel to the direction $u$. Sliding these segments along $u$ and centering them with respect to the hyperplan $u^{\perp}$, gives $S_{u}A$, the \textit{Steiner symmetral} of $A$.

\textit{Steiner symmetrization} play an important role in geometry and its applications. Indeed, this transformation possesses certain contraction properties which allow in many cases to round off the initial set after multiple applications. Moreover, the limiting ball delivers the solution of several optimization problems, as for instance the Isoperimetric Inequality, the Brunn-Minkowski Inequality and the Blaschke-Santal\'o Inequality (see Section 9.2 of Gruber \cite{G}).

Another important transformation is the \textit{Minkowski} (sometimes called Bla\-schke) \textit{symmetrization}. The \textit{Minkowski symmetral} of a convex body $A$ with direction $u\in\mathbb{S}^{d-1}$, denoted by $B_{u}A$, is defined as the arithmetic mean of $A$ and $\pi_{u}(A)$, its orthogonal symmetric with respect to $u^{\perp}$.\\

Our aim is to study the asymptotic behavior of successive Steiner and Minkowski symmetrizations. Recently this question has received considerable development. Without applying for completeness, we will note here a few works characterizing the main tendencies.

Among the works concerning deterministic sequences of directions, let us mention Klain \cite{Klain}. When the directions are chosen among a finite set, he stated the convergence of the sequence of successive Steiner symmetrals to a limiting set which is symmetric under reflection in any of the directions that appear infinitely often in the sequence. In \cite{BKLYZ}, Bianchi et al proved that, from any dense set of directions (in $\mathbb{S}^{d-1}$), it is always possible to extract a countable sequence rounding off any convex body by successive Steiner symmetrizations. They also exhibited countable dense sequences of directions and convex bodies whose corresponding sequences of Steiner symmetrals do not converge at all (the order of directions matters !).

The case of random Steiner symmetrizations has also been investigated. The first result (to our knowledge) is due to Mani Levitska \cite{ML} and concerns the case of i.i.d. directions, chosen uniformly on the sphere $\mathbb{S}^{d-1}$. He establised the a.s. convergence of the sequence of successive Steiner symmetrals of any convex body to a ball. In \cite{V}, Vol\v{c}i\v{c} has extended Mani Levitska's result to measurable sets with finite measure, and to any probability measure assigning positive mass to any open set of $\mathbb{S}^{d-1}$. Let us cite the Burchad and Fortier's paper \cite{BF} in which they stated that the a.s. convergence still occurs for (independent but) non identically distributed directions provided they satisfy some restrictive condition (see (\ref{nonstat}) further). Combining a probabilistic approach and the powerful analytical device of spherical harmonics, Klartag stated in his remarkable article \cite{K}, a rate of convergence for successive Steiner symmetrizations. Precisely, for any given convex body $A$, there exists an (implicit) sequence of $n$ directions such that the Hausdorff distance between the resulting sequence of successive Steiner symmetrals and the limiting ball is smaller than $e^{-c\sqrt{n}}$, with $c>0$. As a key step, Klartag proved a similar result for successive Minkowski symmetrizations, but at exponential rate.\\

Our first result (Theorem \ref{thm:Steiner}) complements and strengthens the results of \cite{K,ML,V}. Indeed, it affirms that the convergence of the sequence of successive Steiner symmetrizations is almost sure on the one hand, and at rate $e^{-c\sqrt{n}}$ on the other hand. Moreover, the random directions are allowed to be non identically distributed and their distributions may avoid some open sets of the sphere $\mathbb{S}^{d-1}$, which is forbidden in \cite{BF,V}. The independence hypothesis of directions can also be relaxed (see Remark \ref{Rmk:indephypo}). The proof of Theorem \ref{thm:Steiner} substantially follows the ideas of Klartag \cite{K}. Firstly, we state the a.s. convergence of successive Minkowski symmetrizations at exponential rate (Theorem \ref{thm:Minkowski}). The main advantage of Minkowski symmetrization over Steiner is to exhibit a (strict) contraction property (see Proposition \ref{prop:contraction}) from which Theorem \ref{thm:Minkowski} straight derives. Thus, the passage from Minkowski to Steiner is only based on the inclusion $S_{u}A\subset B_{u}A$. This explains the loss in rate of convergence between Minkowski and Steiner.

Our second result (Theorem \ref{thm:equiv}) provides a surprising link between Steiner and Minkowski symmetrizations. A sequence of directions $(u_n)_{n\in\mathbb{N}}$ is said to be \textit{$S-$universal} if, for any $k$, the sequence of successive Steiner symmetrizations corresponding to the shifted sequence  $(u_{k+n})_{n\in\mathbb{N}}$ rounds off any convex body. In the same way, the concept of \textit{$M-$universal} sequence (for Minkowski) is introduced. Theorem \ref{thm:equiv} says that the concepts of $S$ and $M-$universality coincide; we will then omit the prefixes $S$ and $M$. This allows in many cases to deduce from known results about the Steiner symmetrization, new results about the Minkowski symmetrization. For example, from aforementioned Mani Levitska's result \cite{ML} about random i.i.d. Steiner symmetrizations, we immediately receive a similar result for Minkowski symmetrizations, without the sophisticated use of spherical harmonics (Proposition \ref{prop:equiv1}). Theorem \ref{thm:equiv} also allows to transfer results of \cite{BKLYZ,BF} to Minkowski symmetrizations. In particular, any dense set of directions (in $\mathbb{S}^{d-1}$) contains a universal subsequence (Proposition \ref{prop:dense}).\\

The paper is organized as follows. Section \ref{sect:Definitions} contains precise definitions of Steiner and Minkowski symmetrizations and their preliminary properties. In Section \ref{sect:Equivalence}, the concepts of $S$ and $M-$universal sequences are introduced. Theorem \ref{thm:equiv} is proved and applied in two different contexts; random (Propositions \ref{prop:equiv1} and \ref{prop:BCuniversal}) and deterministic (Propositions \ref{prop:dense} and \ref{prop:nonCV}). Sections \ref{sect:RandomMinkowski} and \ref{sect:RandomSteiner} are devoted to random symmetrizations (respectively Minkowski and Steiner symmetrizations). The proof of Proposition \ref{prop:contraction}, rather long and thechnical, is addressed in Section \ref{sect:PropContraction}. Finally, some open questions are formulated in Section \ref{sect:Open}.

\section{Steiner and Minkowski symmetrizations}
\label{sect:Definitions}

This section contains the definitions of Steiner and Minkowski symmetrizations and their basic properties. Let us denote by $\mathcal{K}_d$ the set of convex bodies of $\mathbb{R}^{d}$.

\begin{defi}
\label{defi:Steiner}
Let $A\in\mathcal{K}_{d}$ and $u\in\mathbb{S}^{d-1}$. The convex body $A$ can be considered as a family of line segments parallel to the direction $u$. Sliding each of these segments along $u$ so that they become symmetrically balanced around the hyperplane $u^\perp$, a new set is obtained, called the \textbf{Steiner symmetral} of $A$ with direction $u$ and denoted by $S_{u}A$. The mapping $S_u$ defined on $\mathcal{K}_{d}$ is called \textbf{Steiner symmetrization} with direction $u$.
\end{defi}

It derives from Definition \ref{defi:Steiner} that Steiner symmetrization preserves the volume: for any  $A\in\mathcal{K}_{d}$ and $u\in\mathbb{S}^{d-1}$,
\begin{equation}
\label{VolumeInvariance}
vol(S_{u}A) = vol(A)
\end{equation}
(where $vol(A)$ denotes the $d-$dimensional Lebesgue measure $\lambda^d$ of the measurable set $A$).

\begin{figure}[!ht]
\begin{center}
\psfrag{a}{\small{$A$}}
\psfrag{b}{\small{$S_{u}A$}}
\psfrag{c}{\small{$u^{\perp}$}}
\includegraphics[width=5cm,height=6.5cm]{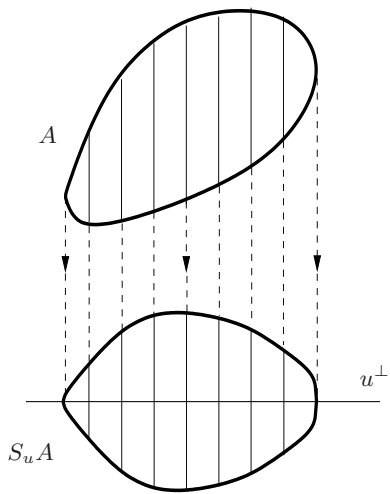}
\end{center}
\caption{\label{fig:Steiner} {\small \textit{Steiner symmetrization with direction $u$. The dotted lines represent the sliding of orthogonal segments along $u$.}}}
\end{figure}

Let us denote by $\pi_u$ the orthogonal reflection operator with respect to the hyperplane $u^{\perp}$:
$$
\forall x \in \mathbb{R}^{d} , \pi_u(x) = x - 2 \langle x , u \rangle ~,
$$
where $\langle \cdot, \cdot\rangle$ is the scalar product in $\mathbb{R}^d$.

\begin{defi} 
\label{defi:Minkowski}
Let $A\in\mathcal{K}_{d}$ and $u\in\mathbb{S}^{d-1}$. The \textbf{Minkowski symmetral} of $A$ with direction $u$, denoted by $B_{u}A$, is defined by
\begin{equation}
\label{Mink}
B_uA = \frac{1}{2} ( A \oplus \pi_u(A) ) ~,
\end{equation}
where $\oplus$ denotes the Minkowski sum of sets $A$ and $B$. The mapping $B_u$ defined on $\mathcal{K}_{d}$ is called \textbf{Minkowski symmetrization} with direction $u$.
\end{defi}

The \textit{support function} $f_{A}$ of a convex body $A\in\mathcal{K}^{d}$ is defined by
$$
f_A(\theta)= \sup_{x\in A}\langle x, \theta\rangle , \; \mbox{ for any } \, \theta \in S^{d-1} ~.
$$
The support functions are a useful tool in Convex geometry. In particular, any convex body is characterized by its support function (see Theorem 4.3 p.57 of \cite{G}). Let $\sigma$ be the Haar probability measure on $\mathbb{S}^{d-1}$. The value
$$
L_A = \int_{\mathbb{S}^{d-1}} f_A d\sigma
$$
is called the \textit{mean radius} of $A$.

Minkowski symmetrization presents an advantage over Steiner symmetrization. Classical properties of support functions (see Proposition 6.2 p.81 of \cite{G}) allows to express $f_{B_{u}A}$ as the arithmetic mean of $f_{A}$ and $f_{\pi_{u}A}$:
\begin{equation}
\label{SupportFunctionMink}
f_{B_{u}A} = \frac{1}{2} ( f_{A} + f_{\pi_u(A)} ) ~.
\end{equation}
As a consequence of (\ref{SupportFunctionMink}), Minkowski symmetrization preserves the mean radius: for any $A\in\mathcal{K}_{d}$ and $u\in\mathbb{S}^{d-1}$,
\begin{equation}
\label{MeanRadiusInvariance}
L(S_{u}A) = L(A) ~.
\end{equation}

Let $B(x,r)$ be the euclidean closed ball with center $x$ and radius $r$. Let $D=B(0,1)$ the unit ball and $v_d$ its volume. The reader may refer to \cite{G} for details about the following properties.

\begin{lem}
\label{lem:BasicProp}
Let $A\in\mathcal{K}_{d}$ and $u\in\mathbb{S}^{d-1}$.
\begin{itemize}
\item[$(i)$] $S_{u}A$ and $B_{u}A$ are convex bodies, symmetric with respect to $u^{\perp}$.
\item[$(ii)$] Let $A'\in\mathcal{K}_{d}$ containing $A$. Then, $S_{u}A\subset S_{u}A'$ and $B_{u}A\subset B_{u}A'$. In particular, if $R(A)$ denotes the \textit{circumradius} of $A$, i.e.
$$
R(A)=\inf\{R>0 , \, A \subset B(0,R)\} ~,
$$
then $S_{u}A$ and $B_{u}A$ are included in the centered ball $R(A)D$.
\item[$(iii)$] $S_{u}A$ is included in $B_{u}A$.
\end{itemize}
\end{lem}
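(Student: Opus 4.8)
The plan is to verify the three items in order, using the characterization of convex bodies by their support functions together with the explicit formulas for Steiner and Minkowski symmetrization. For item $(i)$, I would first treat the Minkowski case: since $A\in\mathcal{K}_d$ and $\pi_u$ is a linear isometry, $\pi_u(A)\in\mathcal{K}_d$, and the Minkowski sum of two convex bodies is again a convex body; rescaling by $1/2$ preserves this, so $B_uA\in\mathcal{K}_d$. Symmetry with respect to $u^\perp$ follows because $\pi_u$ is an involution: $\pi_u(B_uA)=\tfrac12(\pi_u(A)\oplus A)=B_uA$. At the level of support functions this is even more transparent from \eqref{SupportFunctionMink}, since $f_{\pi_u(A)}(\theta)=f_A(\pi_u(\theta))$ and hence $f_{B_uA}\circ\pi_u=f_{B_uA}$. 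For the Steiner case, convexity of $S_uA$ is the classical fact (Brunn--Minkowski in the fibre direction) that, writing $A$ as the region between the graphs of a concave function and a convex function over its projection onto $u^\perp$, the symmetrized body is the region between $\pm\tfrac12$ of the fibre-length function, which is concave by the one-dimensional Brunn--Minkowski inequality; compactness and nonempty interior are immediate, and symmetry in $u^\perp$ is built into the construction. I would cite \cite{G} for the details rather than reproducing this computation.

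For item $(ii)$, the monotonicity of $B_u$ is immediate from \eqref{Mink}: if $A\subset A'$ then $\pi_u(A)\subset\pi_u(A')$ and Minkowski addition is monotone, so $B_uA\subset B_uA'$; equivalently $f_A\le f_{A'}$ pointwise forces $f_{B_uA}\le f_{B_uA'}$ by \eqref{SupportFunctionMink}. The monotonicity of $S_u$ is similarly clear from the fibre description: the fibre of $A$ over each point of $u^\perp$ is contained in the corresponding fibre of $A'$, hence has no greater length, and $S_u$ depends only on these lengths in a monotone way. The ``in particular'' assertion then follows by taking $A'=R(A)D=B(0,R(A))$: by definition of the circumradius $A\subset B(0,R(A))$, and $B(0,R(A))$ is invariant under both $S_u$ and $B_u$ (for $S_u$ because each fibre of a centered ball is already symmetric about $u^\perp$; for $B_u$ because $\pi_u(B(0,R(A)))=B(0,R(A))$, so $B_uB(0,R(A))=\tfrac12(B(0,R(A))\oplus B(0,R(A)))=B(0,R(A))$). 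Hence $S_uA\subset B(0,R(A))$ and $B_uA\subset B(0,R(A))$.

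Finally, item $(iii)$, the inclusion $S_uA\subset B_uA$, is the only point with real content. The plan is to show that the fibre of $B_uA$ over a point $y\in u^\perp$ contains a centered segment at least as long as the fibre of $S_uA$ over $y$, which is the centered segment of length equal to the fibre-length of $A$ over $y$. Writing the fibre of $A$ over $y'$ as an interval, the fibre of $\pi_u(A)$ over $-y'$ (projected, over $y'$) is its reflection, and a point of $B_uA$ over $y$ arises as $\tfrac12$ of the sum of a point of $A$ over some $y'$ and a point of $\pi_u(A)$ over $2y-y'$. Choosing $y'=y$ and taking the two endpoints of the fibre of $A$ over $y$ together with the corresponding reflected endpoints of $\pi_u(A)$, one obtains in $B_uA$ the centered segment over $y$ of half-length equal to half the fibre-length of $A$ over $y$ --- exactly the fibre of $S_uA$. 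I expect this fibre-wise comparison to be the main (though still routine) obstacle, since it requires care in bookkeeping which fibres of $A$ and $\pi_u(A)$ contribute to a given fibre of the Minkowski average; again I would point to \cite{G} for the standard proof rather than expand it fully here.
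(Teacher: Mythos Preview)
Your proposal is correct and follows essentially the same route as the paper: items $(i)$ and $(ii)$ are deferred to \cite{G} with only brief sketches, and item $(iii)$ is proved fibre-wise by observing that the fibre $\Delta'$ of $A$ over $y\in u^\perp$ and its reflection $\pi_u(\Delta')$ (which is the fibre of $\pi_u(A)$ over the same $y$) satisfy $\tfrac12(\Delta'\oplus\pi_u(\Delta'))\subset\tfrac12(A\oplus\pi_u(A))=B_uA$, and the left side is exactly the fibre of $S_uA$ over $y$. The only remark is that the ``bookkeeping obstacle'' you anticipate is not really there: since $\pi_u$ fixes $u^\perp$ pointwise, the fibre of $\pi_u(A)$ over $y$ is simply $\pi_u(\Delta')$, so the choice $y'=y$ is the whole story and the paper's one-line identity $\Delta=\tfrac12(\Delta'\oplus\pi_u(\Delta'))\subset B_uA$ already finishes the argument.
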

 
The inclusion $S_{u}A\subset B_{u}A$ can be understood as follows. Let $\Delta$ be one of the orthogonal segments to $u^\perp$ which compose $S_{u}A.$ It is obtained by translation of a segment $\Delta'$ composing $A$ (see Definition \ref{defi:Steiner}). Then,
$$
\Delta = \frac{1}{2}(\Delta' \oplus \pi_u(\Delta')) \, \subset \, \frac{1}{2}(A + {\pi_u(A)}) = B_uA
$$
and Lemma \ref{lem:BasicProp} $(iii)$ follows.

We deduce immediately from identities (\ref{VolumeInvariance}) and (\ref{MeanRadiusInvariance}), and Lemma \ref{lem:BasicProp} $(iii)$ that Steiner symmetrization decreases the mean radius whereas Minkowski symmetrization increases the volume:
\begin{equation}
\label{MeanRadVol}
L(S_uA) \leq L(A) \; \mbox{ and } \; vol(B_uA) \geq vol(A) ~.
\end{equation}

Two classical metrics on the set $\mathcal{K}_d$ are involved in our proofs; the Hausdorff distance
$$
d_H(A,B) = \max \left\{ \inf \{ \varepsilon>0 \;|\; A \subset B\oplus B(0,\varepsilon) \} \;,\; \inf \{ \varepsilon>0 \;|\; B \subset A\oplus B(0,\varepsilon) \} \right\}
$$
and the Nikod\'ym distance
$$
d_N(A,B) = \lambda^d (A \Delta B) = \lambda^{d}(A\setminus B) + \lambda^{d}(B\setminus A) ~.
$$
These distances generate on $\mathcal{K}_d$ the same topology. Hence, all the convergences stated in the sequel correspond to this topology. Some inequalities about Hausdorff and Nikod\'ym distances used in our proofs are addressed in Appendix \ref{appendix}.

\section{Theorem of equivalence}
\label{sect:Equivalence}

Let $(u_n)_{n\geq 1}$ be a sequence of elements of $\mathbb{S}^{d-1}$. For integers $n\geq k\geq1$, we denote by $S_{k,n}$ the sequence of $n-k+1$ consecutive Steiner symmetrizations from $u_{k}$ to $u_{n}$:
$$
S_{k,n}A = S_{u_{n}}(\ldots S_{u_{k+1}}(S_{u_{k}} A)\ldots)
$$
where $A$ is a convex body. When $k=1$, $S_{1,n}A$ is merely denoted by $S_{n}A$. For Minkowski symmetrizations, notations $B_{k,n}A$ and $B_{n}A$ are defined as above.

Let $r(A)$ be the real number such that the ball $r(A)D$ has the same volume as $A$. Recall the set $\mathcal{K}_{d}$ of convex bodies is endowed with the Hausdorff distance. A sequence $(u_n)_ {n\geq 1}$ \textbf{$S-$rounds} the set $A\in\mathcal{K}_{d}$ if
$$
S_nA \rightarrow r(A)D
$$
and \textbf{$M-$rounds} $A$ if
$$
B_nA \rightarrow L(A)D
$$
as $n$ tends to infinity. A sequence $(u_n)_{n\geq 1}$ \textbf{strongly $S-$rounds} the convex body $A$ if, for any $k$,
$$
S_{k,n} A \rightarrow r(A)D
$$
as $n$ tends to infinity. The same terminology holds for Minkowski symmetrizations; $(u_n)_{n\geq 1}$ \textbf{strongly $M-$rounds} $A$ if, for any $k$,
$$
B_{k,n} A \rightarrow L(A)D
$$
as $n$ tends to infinity. Finally, $(u_n)_{n\geq 1}$ is said \textbf{$S-$universal} (respectively \textbf{$M-$universal}) if it strongly $S$-rounds (respectively strongly $M$-rounds) any $A$ of $\mathcal{K}_{d}$.

The next result shows that the notions of $S$ and $M-$universality coincide. Then, such a sequence will be merely said \textbf{universal}.

\begin{thm}
\label{thm:equiv}
A sequence $(u_n)_{n\geq 1}$ of $\mathbb{S}^{d-1}$ is $S-$universal if and only if it is $M-$universal.
\end{thm}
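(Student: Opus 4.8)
The plan is to exploit the inclusion $S_uA\subset B_uA$ (Lemma \ref{lem:BasicProp} $(iii)$) together with the two monotonicity facts from (\ref{MeanRadVol}), and to move between the two symmetrizations via the common anchor of the round ball. Let me fix a convex body $A$. One direction — that $M$-universal implies $S$-universal — should be the easier one, because Steiner symmetrization is ``smaller'': since $S_{k,n}A\subset B_{k,n}A$ for every $k\le n$ (this inclusion persists under iteration, using Lemma \ref{lem:BasicProp} $(ii)$ to compose the inclusions), if $B_{k,n}A\to L(A)D$ then the sets $S_{k,n}A$ are eventually squeezed inside balls $(L(A)+\varepsilon)D$; combined with the volume invariance $vol(S_{k,n}A)=vol(A)$, which forces $S_{k,n}A$ to contain (by an isoperimetric/pigeonhole argument) a non-negligible amount of mass, one should be able to conclude that the only possible limit along subsequences is $r(A)D$, and then that the full sequence converges. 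The cleaner route is to observe $r(A)D\subset S_{k,n}A$ is \emph{not} true in general, so instead I would argue via mean radius: $L(S_{k,n}A)\le L(A)$ always, and $S_{k,n}A\subset B_{k,n}A\to L(A)D$ gives $\limsup$ of the circumradius $\le L(A)$; hence any Hausdorff-subsequential limit $K$ of $S_{k,n}A$ has $L(K)\le L(A)$, $R(K)\le L(A)$, and $vol(K)=vol(A)$. Since $R(K)\le L(A)\le$ ``radius'' only when $K$ is the ball (here one needs that $R(K)=L(K)$ forces $K$ to be a ball, a standard fact), and a ball with volume $vol(A)$ is exactly $r(A)D$, we get $S$-rounding of $A$; doing this for every shift $k$ gives $S$-universality.

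For the converse — $S$-universal implies $M$-universal — the inclusion now runs the wrong way, and this is where the real work lies. The idea is to \emph{sandwich} the Minkowski iterates. Upper bound: because $S_{k,n}A$ converges to $r(A)D$, for large $n$ we have $S_{k,n}A\supset (r(A)-\varepsilon)D$; but there is no direct way to push this through $B$. Instead I would use that $S$-universality applied to a slightly inflated body controls $B_{k,n}A$ from outside: one shows that for a convex body $A$ contained in $RD$, $B_{k,n}A$ is always contained in $RD$ (Lemma \ref{lem:BasicProp} $(ii)$), so the sequence $B_{k,n}A$ is precompact in the Hausdorff metric; let $K$ be any subsequential limit. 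The mean radius is preserved under $B$: $L(B_{k,n}A)=L(A)$, hence $L(K)=L(A)$. It remains to show $K=L(A)D$, i.e.\ that $K$ is a ball. Here is the crux: I would feed the body $K$ (or rather, run the symmetrizations starting from $K$) into the $S$-universality hypothesis. Since $(u_{k+n})_n$ is $S$-universal for every $k$, and since Steiner symmetrization commutes appropriately with the limiting operation, one shows $S_{j,m}K$ rounds to $r(K)D$ while simultaneously $L(S_{j,m}K)\le L(K)=L(A)$; the key inequality is that for any convex body, $r(K)\le L(K)$ with equality iff $K$ is a ball (this follows from the Urysohn-type inequality $r(K)\le L(K)$, which in turn comes from the isoperimetric inequality / Jensen applied to the support function). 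Applying this to every shift and using that $B$ preserves $L$, one extracts that $K$ must satisfy $r(K)=L(K)$, hence $K$ is a ball of mean radius $L(A)$, i.e.\ $K=L(A)D$. Since every subsequential limit equals $L(A)D$, the whole sequence $B_{k,n}A$ converges, for every $k$, proving $M$-universality.

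The step I expect to be the main obstacle is making rigorous the ``feeding $K$ back into the $S$-universality hypothesis'' argument in the converse direction: one must carefully justify that a Hausdorff-subsequential limit $K$ of $(B_{k,n}A)_n$ can be controlled by the Steiner rounding of the \emph{same} direction sequence, which requires either (a) a continuity/stability argument showing $S_{m}(B_{k,n}A)$ is close to $S_m K$ uniformly, together with the contraction-type estimates, or (b) a more clever direct comparison, e.g.\ bounding $d_H(B_nA, r(A)D)$ by a quantity involving $d_H(S_nA, r(A)D)$ via support functions and the identity (\ref{SupportFunctionMink}). Option (b) seems most promising: writing $h = f_{B_nA} - L(A)$ and $g = f_{S_nA} - f_{r(A)D}$ (after centering), one has $0\le g$ pointwise is false, but $S_nA\subset B_nA$ gives $f_{S_nA}\le f_{B_nA}$, and combined with $vol(B_nA)\ge vol(A)=vol(S_nA)$ and the mean-radius identities, a quantitative Urysohn inequality should squeeze $\|h\|_\infty$ in terms of $\|g\|_\infty$. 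A secondary technical point is the standard but not-quite-trivial fact that $R(K)=L(K)$ (or $r(K)=L(K)$) implies $K$ is a ball; I would cite this from \cite{G} rather than prove it. Finally, the extraction of convergence of the \emph{full} sequence from ``every subsequential limit is $r(A)D$ (resp.\ $L(A)D$)'' is routine given precompactness, and I would state it in one line.
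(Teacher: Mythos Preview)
Your overall architecture (precompactness, subsequential limits, identify every limit as the ball) is the same as the paper's, but you are missing the one trick that makes both directions close, and without it the argument has a genuine gap. In the $M\Rightarrow S$ direction you obtain, for a subsequential limit $K$ of $(S_{k,n}A)_n$, only $R(K)\le L(A)$, $L(K)\le L(A)$ and $vol(K)=vol(A)$; from these alone you cannot deduce $R(K)=L(K)$, so the conclusion ``$K$ is a ball'' is unjustified. In the $S\Rightarrow M$ direction you correctly flag the obstacle, but your proposals (a continuity argument for $S_m(B_{k,n}A)\to S_mK$, or a quantitative Urysohn bound) are both harder than what is actually needed and you do not carry them out.

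The missing idea, which the paper uses, is to apply the \emph{other} symmetrization not to $A$ and not to the limit $K$, but to an \emph{intermediate iterate}. For $S\Rightarrow M$: take a convergent subsequence $B_{n_k}A\to E$; for $m>k$ the inclusion $S_uA\subset B_uA$ iterates to give
\[
S_{n_k+1,n_m}(B_{n_k}A)\ \subset\ B_{n_k+1,n_m}(B_{n_k}A)=B_{n_m}A .
\]
By $S$-universality (applied to the body $B_{n_k}A$ and the shifted sequence) the left side tends to $r(B_{n_k}A)D$ as $m\to\infty$, while the right side tends to $E$; hence $r(B_{n_k}A)D\subset E$. Since $vol(B_{n_k}A)\nearrow vol(E)$, letting $k\to\infty$ gives a ball of volume $vol(E)$ inside $E$, so $E$ is that ball; then $L(E)=L(A)$ identifies it as $L(A)D$. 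The $M\Rightarrow S$ direction is genuinely symmetric: with $S_{n_k}A\to E$ one writes $S_{n_m}A\subset B_{n_k+1,n_m}(S_{n_k}A)\to L(S_{n_k}A)D$, so $E\subset L(S_{n_k}A)D$ for every $k$; since $L(S_{n_k}A)\searrow L(E)$ this yields $E\subset L(E)D$, which (via $f_E\le L(E)$ and $\int f_E\,d\sigma=L(E)$) forces $E=L(E)D$, and then $vol(E)=vol(A)$ gives $E=r(A)D$. This ``apply the other symmetrization to $B_{n_k}A$ (resp.\ $S_{n_k}A$)'' is exactly what replaces your continuity step and repairs your $R(K)=L(K)$ claim.
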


\begin{proof}
We only focus on the sufficient condition because the necessary one is similar.\\
Let $A$ be a convex body. Since Minkowski symmetrization increases the volume, the sequence $(vol(B_{n}A))_{n\geq 1}$ is nondecreasing. Let $V$ its limit. The sets $B_{n}A$, for $n\geq 1$, are all included in the compact set $\{K\in\mathcal{K}_{d} , K\subset R(A)D\}$ of $(\mathcal{K}_{d},d_{H})$ (see Theorem 1.8.4 p.49 in \cite{S} for details). So, $(B_{n}A)_{n\geq 1}$ admits a convergent subsequence $(B_{n_{k}}A)_{k\geq 1}$. Let $E$ its limit. Since the volume is a continuous function on $(\mathcal{K}_{d},d_{H})$, the volume of $E$ equals $V$.\\
For any $m>k$, Lemma \ref{lem:BasicProp} $(iii)$ implies
$$
S_{n_k+1,n_{m}}(B_{n_{k}}A) \subset B_{n_k+1,n_{m}}(B_{n_{k}}A) =
B_{n_{m}}A ~.
$$
By $S-$universality, the left-hand side of the above inclusion converges to the ball $r(B_{n_{k}}A)D$ whereas the right one converges to $E$. Hence, the set $E$ contains $r(B_{n_{k}}A)D$ whose volume tends to $V$ as $k$ tends to infinity. This forces $E$ to be the ball of volume $V$.\\
As a result, any convergent subsequence of $(B_{n}A)_{n\geq 1}$ has the same limit $r(V)D$. By compactness, this also holds for the sequence $(B_{n}A)_{n\geq 1}$ itself. Thus, we indentify $r(V)$ to $L(A)$ by Lemma \ref{lem:HausdorffSupportFct}: as $n\to\infty$,
$$
L(A) =  \int_{\mathbb{S}^{d-1}} f_{B_{n}A} d\sigma \; \to \; \int_{\mathbb{S}^{d-1}} f_{r(V)D} d\sigma = r(V) ~.
$$
Finally, for any $k$, applying the same strategy to the $S-$universal sequence $(u_{k+n})_{n\geq 1}$, we get that $B_{k+1,n} A$ tends to $L(A)D$. The $M-$universal character of $(u_{n})_{n\geq 1}$ follows.\end{proof}

\vskip0.2cm

In what follows, Theorem \ref{thm:equiv} is applied in two different contexts; random (Propositions \ref{prop:equiv1} and \ref{prop:BCuniversal}) and deterministic (Propositions \ref{prop:dense} and \ref{prop:nonCV}). For the first three results, a sufficient condition for the sequence of directions is given, ensuring its universal character. The fourth result concerns the dimension $2$: there exists a uniformly distributed sequence on $\mathbb{S}^{1}$ which is not universal.

\begin{prop}
\label{prop:equiv1}
Let $(U_{n})_{n\geq 1}$ be a stationary sequence of random variables of $\mathbb{S}^{d-1}$, i.e. for any $k$, the sequences $(U_{n})_{n\geq 1}$ and $(U_{k+n})_{n\geq 1}$ are identically distributed. Assume that, for any convex body $A$, $(U_{n})_{n\geq 1}$ a.s. $S-$rounds $A$. Then, $(U_{n})_{n\geq 1}$ is a.s. universal. The same conclusion holds when the $S-$rounding hypothesis is replaced with the $M-$rounding one.
\end{prop}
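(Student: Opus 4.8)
The plan is to reduce the statement about universality of the \emph{random} sequence $(U_n)_{n\geq 1}$ to Theorem \ref{thm:equiv}, which is a purely deterministic statement about a fixed sequence of directions. The key observation is that "universal" requires strong rounding of \emph{every} convex body simultaneously, while the hypothesis only gives, for each fixed $A$, a null set off which $(U_n)$ $S$-rounds $A$. So the first task is to upgrade the per-body a.s. rounding to a single almost sure event on which $(U_n)$ strongly $S$-rounds every convex body; once that is done, Theorem \ref{thm:equiv} applies deterministically on that event and yields $M$-universality (hence universality) on the same event.

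First I would handle the passage from "rounds" to "strongly rounds". Fix a convex body $A$. By hypothesis $(U_n)_{n\geq1}$ a.s. $S$-rounds $A$; by stationarity, for each $k$ the shifted sequence $(U_{k+n})_{n\geq1}$ has the same distribution, so it too a.s. $S$-rounds $A$, i.e. $S_{k,n}A\to r(A)D$ a.s. Intersecting over the countably many shifts $k\in\mathbb{N}$, we get a single a.s. event on which $(U_n)$ strongly $S$-rounds $A$. Next, to make this uniform in $A$, I would choose a countable family $\mathcal{D}\subset\mathcal{K}_d$ that is dense for the Hausdorff distance (for instance, finite unions of balls, or polytopes, with rational data). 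On the intersection $\Omega_0$ over all $A\in\mathcal{D}$ of the a.s. events just constructed — still an a.s. event, being a countable intersection — the sequence $(U_n)$ strongly $S$-rounds every member of $\mathcal{D}$. A monotonicity/approximation argument then extends this to all convex bodies on $\Omega_0$: given an arbitrary $K\in\mathcal{K}_d$ and $\varepsilon>0$, pick $A_-,A_+\in\mathcal{D}$ with $A_-\subset K\subset A_+$ and $d_H(A_-,A_+)$ small (possible by density, e.g. by slightly shrinking and dilating $K$ about an interior point and approximating); by Lemma \ref{lem:BasicProp}$(ii)$, $S_{k,n}A_-\subset S_{k,n}K\subset S_{k,n}A_+$, the two outer terms converge to the balls $r(A_-)D$ and $r(A_+)D$, and since $r(A_\pm)\to r(K)$ as $\varepsilon\to0$ (continuity of $K\mapsto \mathrm{vol}(K)^{1/d}$ for Hausdorff convergence, together with $r(K)=(\mathrm{vol}(K)/v_d)^{1/d}$), a squeeze gives $S_{k,n}K\to r(K)D$. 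Hence on $\Omega_0$ the sequence $(U_n)$ is $S$-universal.

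Once $(U_n)$ is deterministically $S$-universal on $\Omega_0$, Theorem \ref{thm:equiv} gives that on $\Omega_0$ it is also $M$-universal, so it is universal on $\Omega_0$, which is an a.s. event — this is the claimed conclusion. For the last sentence of the proposition, the argument is symmetric: if instead the hypothesis is that $(U_n)$ a.s. $M$-rounds every fixed $A$, the same stationarity plus countable-intersection plus density argument (now using the monotonicity $B_{k,n}A_-\subset B_{k,n}K\subset B_{k,n}A_+$ from Lemma \ref{lem:BasicProp}$(ii)$ and $L(A_\pm)\to L(K)$, with $L$ continuous for $d_H$ by Lemma \ref{lem:HausdorffSupportFct}) produces an a.s. event on which $(U_n)$ is $M$-universal, and Theorem \ref{thm:equiv} then upgrades it to universal.

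The main obstacle is the uniformity in $A$: the hypothesis is quantified as "for all $A$, almost surely", and one must exchange it for "almost surely, for all $A$". The squeeze-by-approximation step is what makes this exchange legitimate, and it is worth being a little careful that the approximating bodies $A_-\subset K\subset A_+$ can indeed be taken from a \emph{fixed countable} dense family — shrinking and dilating $K$ about an interior point and then approximating each with a rational polytope handles this. The stationarity hypothesis is used only to get strong rounding (all shifts) from plain rounding; everything else is deterministic convex geometry already available in the excerpt.
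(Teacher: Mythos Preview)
Your proof is correct and follows essentially the same strategy as the paper's: pass to a countable dense family of convex bodies, intersect the corresponding almost-sure events, extend to an arbitrary body by approximation and monotonicity of $S_u$ (resp.\ $B_u$), use stationarity to handle all shifts, and then invoke Theorem~\ref{thm:equiv}. The only notable difference is the approximation step: you use a two-sided sandwich $A_-\subset K\subset A_+$, whereas the paper uses only an outer bound $A\subset C_j\oplus B(0,\varepsilon)$ and closes the argument with volume preservation (any subsequential limit $E$ has $\mathrm{vol}(E)=\mathrm{vol}(A)$, and $E\subset r(A)D$ then forces $E=r(A)D$), which spares the construction of inner approximants from the fixed countable family.
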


\begin{proof}
We only check the result under the $S-$rounding hypothesis, the proof under the $M-$rounding hypothesis being very similar.\\
Let $(C_{j})_{j\geq 1}$ be a countable dense subset of the separable set $(\mathcal{K}_{d},d_{H})$. By hypothesis, for any index $j$ and any positive rational number $\varepsilon$, there exists an event of probability $1$ on which $(U_{n})_{n\geq 1}$ $S-$rounds $C_{j}^{\varepsilon}:=C_{j}\oplus B(0,\varepsilon)$. Let $\Omega_{0}$ be the intersection of these events. We are going to prove that on $\Omega_{0}$, $(U_{n})_{n\geq 1}$ $S-$rounds any convex body.\\
Let $A\in\mathcal{K}_{d}$. By compactness, let us consider a convergent subsequence $(S_{n_{k}}A)_{k\geq 1}$ of $(S_{n}A)_{n\geq 1}$ whose limit is denoted by $E$. Since the volume is a continuous function on $(\mathcal{K}_{d},d_{H})$,
\begin{equation}
\label{EgalVolume}
vol(E) = \lim_{k\to\infty} vol(S_{n_{k}}A) = vol(A) ~.
\end{equation}
Let $\varepsilon>0$ be a rational number. There exists an index $j=j(\varepsilon)$ such that $A$ is included in $C_{j}^{\varepsilon}$. Hence, for any $k$,
$$
S_{n_k} A \subset S_{n_k}(C_{j}^{\varepsilon}) ~.
$$
When $k$ tends to infinity and on $\Omega_{0}$, the above inclusion becomes $E\subset r(C_{j}^{\varepsilon})D$. Taking $\varepsilon\searrow 0$, it follows $E\subset r(A)D$. By (\ref{EgalVolume}), this is possible only if $E=r(A)D$. We conclude by compactness that $(U_{n})_{n\geq 1}$ $S-$rounds any $A\in\mathcal{K}_{d}$ on the event $\Omega_{0}$ of probability $1$.\\
By stationarity, for any $k$, this proof applies to $(U_{k+n})_{n\geq 1}$: there exists an event $\Omega_{k}$ of probability $1$ on which the sequence $(U_{k+n})_{n\geq 1}$ $S-$rounds any $A\in\mathcal{K}_{d}$. Hence, by Theorem \ref{thm:equiv}, $(U_{n})_{n\geq 1}$ is universal on $\cap_k\Omega_k$.
\end{proof}

When the random variables $U_{n}$, $n\geq 1$, are independent the hypothesis of stationarity on the sequence $(U_{n})_{n\geq 1}$ can be weakened. The following condition has been introduced by A. Bouchard and M. Fortier \cite{BF}: for any $r>0$ and any sequence $(v_n)_{n\geq 1}$ in $\mathbb{S}^{d-1}$,
\begin{equation}
\label{nonstat}
\sum_{n=1}^{\infty} \P \left( U_{n} \in B(v_{n},r) \right) = \infty ~. 
\end{equation} 
Thanks to the Borel-Cantelli lemma, condition (\ref{nonstat}) implies each open ball $V$ of the sphere $\mathbb{S}^{d-1}$ with positive radius is a.s. infinitely often visited by the $U_{n}$'s. Bouchard and Fortier stated (Corollary 1 of \cite{BF}) that a sequence $(U_{n})_{n\geq 1}$ of independent random variables satisfying (\ref{nonstat}) a.s. $S-$rounds any convex body $A$. Theorem \ref{thm:equiv} extends their result to Minkowski symmetrizations.

\begin{prop}
\label{prop:BCuniversal}
Let $(U_{n})_{n\geq 1}$ be a sequence of independent random variables of $\mathbb{S}^{d-1}$ satisfying condition (\ref{nonstat}). Then, $(U_{n})_{n\geq 1}$ is a.s. universal.
\end{prop}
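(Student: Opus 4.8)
The plan is to reduce Proposition \ref{prop:BCuniversal} to the combination of Theorem \ref{thm:equiv} and the cited result of Bouchard and Fortier (Corollary 1 of \cite{BF}), exactly as Proposition \ref{prop:equiv1} reduces the stationary case to Theorem \ref{thm:equiv} plus Mani-Levitska-type input. The only genuine work is to recover, in the absence of stationarity, a statement valid for \emph{all} shifted sequences simultaneously. First I would observe that condition (\ref{nonstat}) is \emph{shift-stable}: if $(U_n)_{n\ge1}$ satisfies (\ref{nonstat}), then for every fixed $k$ the shifted sequence $(U_{k+n})_{n\ge1}$ still satisfies (\ref{nonstat}), since deleting the first $k$ terms only removes a finite partial sum from each of the (divergent) series $\sum_n \P(U_n\in B(v_n,r))$. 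Hence the Bouchard--Fortier result applies verbatim to each shifted sequence.

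Next I would make the almost sure statement uniform in $k$. For each fixed $k$, Corollary 1 of \cite{BF} applied to $(U_{k+n})_{n\ge1}$ gives, for each fixed convex body, an event of full probability on which that shifted sequence $S$-rounds it; running the countable-dense-subset argument of Proposition \ref{prop:equiv1} (pick $(C_j)_j$ dense in $(\mathcal{K}_d,d_H)$, handle the enlargements $C_j^\varepsilon$ for rational $\varepsilon>0$, and use the volume-preservation / monotonicity squeeze to upgrade from the dense family to all of $\mathcal{K}_d$) produces a single event $\Omega_k$ with $\P(\Omega_k)=1$ on which $(U_{k+n})_{n\ge1}$ $S$-rounds every $A\in\mathcal{K}_d$. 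Equivalently, one may simply note that ``$(U_{k+n})_{n\ge1}$ $S$-rounds every convex body'' is the defining property of strong $S$-rounding for the original sequence read from index $k$, so the event $\Omega_k$ is precisely the event that the BF conclusion holds for the $k$-shift. Setting $\Omega_0 = \bigcap_{k\ge1}\Omega_k$, a countable intersection of full-probability events, we get $\P(\Omega_0)=1$, and on $\Omega_0$ the sequence $(U_n)_{n\ge1}$ is $S$-universal (it strongly $S$-rounds every convex body). Finally, Theorem \ref{thm:equiv} upgrades $S$-universality to universality pathwise on $\Omega_0$, which is the claim.

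The proof is essentially a bookkeeping exercise, so there is no real obstacle; the one point requiring a line of justification is the shift-stability of (\ref{nonstat}), and the one point requiring care is that the ``for every $k$'' quantifier must be absorbed into a single full-probability event before invoking Theorem \ref{thm:equiv}, which is handled by the countable intersection. I would write it up in two short paragraphs: one establishing that each shift satisfies (\ref{nonstat}) and invoking \cite{BF} together with the Proposition \ref{prop:equiv1} densification to get $\Omega_k$, and one intersecting over $k$ and applying Theorem \ref{thm:equiv}.

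\begin{proof}
Condition (\ref{nonstat}) is stable under shifts: for any fixed $k\geq 1$, any $r>0$ and any sequence $(v_n)_{n\geq 1}$ of $\mathbb{S}^{d-1}$, one has
$$
\sum_{n=1}^{\infty} \P\left( U_{k+n} \in B(v_{n},r) \right) = \sum_{m=k+1}^{\infty} \P\left( U_{m} \in B(v_{m-k},r) \right) = \infty ~,
$$
since this series differs from a divergent one of the form (\ref{nonstat}) (applied to the sequence $w_{m}=v_{m-k}$ for $m>k$, and arbitrary $w_1,\dots,w_k$) only by a finite number of terms. Thus, for each $k\geq 1$, the sequence $(U_{k+n})_{n\geq 1}$ is a sequence of independent random variables of $\mathbb{S}^{d-1}$ satisfying (\ref{nonstat}). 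By Corollary 1 of \cite{BF}, for each $k$ and each convex body $A$, the sequence $(U_{k+n})_{n\geq 1}$ a.s. $S-$rounds $A$. Repeating for $(U_{k+n})_{n\geq 1}$ the argument in the proof of Proposition \ref{prop:equiv1} (based on a countable dense subset $(C_{j})_{j\geq 1}$ of $(\mathcal{K}_{d},d_{H})$ and on the inclusion $S_{k,n}A \subset S_{k,n}(C_{j}^{\varepsilon})$ together with volume preservation), there exists an event $\Omega_{k}$ of probability $1$ on which $(U_{k+n})_{n\geq 1}$ $S-$rounds every $A\in\mathcal{K}_{d}$.

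Let $\Omega_{0}=\bigcap_{k\geq 1}\Omega_{k}$, which has probability $1$ as a countable intersection of events of probability $1$. On $\Omega_{0}$, for every $k$ the shifted sequence $(U_{k+n})_{n\geq 1}$ $S-$rounds every convex body; that is, $(U_{n})_{n\geq 1}$ strongly $S-$rounds every convex body, hence is $S-$universal. By Theorem \ref{thm:equiv}, on $\Omega_{0}$ the sequence $(U_{n})_{n\geq 1}$ is also $M-$universal, hence universal. Therefore $(U_{n})_{n\geq 1}$ is a.s. universal.
\end{proof}
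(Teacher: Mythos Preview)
Your proof is correct and follows exactly the approach the paper has in mind: the paper does not spell out a separate proof for Proposition \ref{prop:BCuniversal}, but simply remarks that Corollary 1 of \cite{BF} gives a.s.\ $S$-rounding and that Theorem \ref{thm:equiv} then yields universality. You have made explicit the two points the paper leaves tacit---the shift-stability of condition (\ref{nonstat}) and the densification argument borrowed from the proof of Proposition \ref{prop:equiv1}---and both are handled correctly.
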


In the case of i.i.d. directions, Theorems \ref{thm:Minkowski} and \ref{thm:Steiner} specify the rate of convergence, but the price to pay is more high.\\

In \cite{BKLYZ}, G. Bianchi et al proved that each countable dense subset $T\subset\mathbb{S}^{d-1}$ of directions contains a (deterministic) sequence $(u_{n})_{n\geq 1}$ $S-$rounding any given convex body $A$. This result is strengthened here and, using Theorem \ref{thm:equiv}, it is extended to Minkowski symmetrizations.

\begin{prop}
\label{prop:dense}
Every countable dense subset $T\subset\mathbb{S}^{d-1}$ contains a universal sequence.
\end{prop}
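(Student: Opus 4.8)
**The plan is to leverage Theorem \ref{thm:equiv} so that it suffices to extract an $S$-universal sequence from $T$, and then to build such a sequence by a diagonal argument over a countable dense family of convex bodies, reusing the strengthened version of the Bianchi et al. result.**

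First I would recall what we actually need: by Theorem \ref{thm:equiv} a sequence is $S$-universal iff it is $M$-universal, so if we produce a sequence $(u_n)_{n\geq 1}$ drawn from $T$ that is $S$-universal, it is automatically universal. So the whole task reduces to extracting an $S$-universal sequence from the countable dense set $T$. Now, $S$-universality is a stronger demand than the $S$-rounding result of \cite{BKLYZ}: we need that for \emph{every} shift $k$ and \emph{every} convex body $A$, $S_{k,n}A \to r(A)D$. The key observation is that $r(\cdot)$ and the volume are continuous on $(\mathcal{K}_d,d_H)$ and that symmetrization is monotone under inclusion (Lemma \ref{lem:BasicProp}$(ii)$), so it is enough to round off all members of a fixed countable dense family $(C_j)_{j\geq 1}$ of $\mathcal{K}_d$ with a single sequence: if $S_{k,n}C_j\to r(C_j)D$ for all $j$, then an arbitrary $A$ is sandwiched, for each rational $\varepsilon>0$, between $C_j$ and $C_j^\varepsilon = C_j\oplus B(0,\varepsilon)$ for a suitable $j$, and the usual squeeze (exactly as in the proof of Proposition \ref{prop:equiv1}) forces $S_{k,n}A\to r(A)D$.

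Next I would carry out the diagonal construction. Enumerate the countably many ``tasks'' as pairs $(j,k,m)$ ranging over indices $j$ of the dense family, shift parameters $k$, and tolerances $1/m$. We build the sequence $(u_n)$ in blocks. Having already committed the first $N$ directions (all lying in $T$), we handle the next task: by the strengthened \cite{BKLYZ} result, from the dense set $T$ one can choose a \emph{finite} string of directions from $T$ that brings $S_{\text{(current state)}}C_j$ within Hausdorff distance $1/m$ of its limiting ball $r(C_j)D$ — here one uses that appending directions to an already-symmetrized body is the same as symmetrizing a new convex body, and that $r$ is preserved by Steiner symmetrization. Append that finite string, update $N$, move to the next task. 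Because each convex body's Steiner symmetrals stay inside the circumscribed ball $R(C_j)D$ and the volume is conserved, once we are within $1/m$ we can, on later blocks devoted to other tasks, drift away only in a controlled manner — so I would interleave the blocks so that for each fixed $(j,k)$ the tolerance improves along a subsequence of blocks, and then argue convergence. The cleanest way to avoid the ``later blocks ruin earlier progress'' issue is to note that we only need, for each $(j,k)$, that $\liminf$ of the $d_H$-distance to $r(C_j)D$ is $0$ along the full sequence together with the squeeze lemma (which only needs a subsequence converging from outside and volume conservation) — so in fact a single pass handling each $(j,k,m)$ once suffices, and no re-visiting is required.

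The main obstacle I expect is precisely this monotonicity/squeeze bookkeeping: making sure that rounding off the countable dense family $(C_j)$ with shifts — rather than a single convex body without shifts as in \cite{BKLYZ} — still only costs a diagonal argument, and that the ``squeeze between $C_j$ and $C_j^\varepsilon$'' step genuinely upgrades convergence of the dense family to convergence of all of $\mathcal{K}_d$ for every shift $k$ simultaneously. This is where I would be most careful: one must check that the event/analytic argument of Proposition \ref{prop:equiv1} (which there used stationarity) goes through here purely from the deterministic $S$-rounding of the $C_j^\varepsilon$ and the continuity of $\mathrm{vol}$ and $r$, with the shift $k$ only relabelling the sequence. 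Once that is in place, invoking Theorem \ref{thm:equiv} to pass from $S$-universal to universal is immediate, and the proof is complete.
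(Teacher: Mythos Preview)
Your overall architecture --- reduce to $S$-universality via Theorem~\ref{thm:equiv}, build the sequence in finite blocks by repeatedly invoking the Bianchi et al.\ result, and pass from a countable family to all of $\mathcal{K}_d$ by a squeeze as in Proposition~\ref{prop:equiv1} --- matches the paper's. But there is a genuine gap in how you handle the ``later blocks ruin earlier progress'' issue.

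You propose that it is enough to have, for each $(j,k)$, $\liminf_n d_H(S_{k,n}C_j^\varepsilon, r(C_j^\varepsilon)D)=0$, and that the squeeze then upgrades this to full convergence for arbitrary $A$. This does not work. Re-examine the squeeze argument: one takes an \emph{arbitrary} convergent subsequence $(S_{k,n_l}A)_l$ with limit $E$, and needs $E\subset r(C_j^\varepsilon)D$ via $S_{k,n_l}A\subset S_{k,n_l}C_j^\varepsilon$. For that you need $S_{k,n_l}C_j^\varepsilon\to r(C_j^\varepsilon)D$ along \emph{that same} subsequence $(n_l)$, i.e.\ full convergence of $S_{k,n}C_j^\varepsilon$. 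Knowing only that \emph{some} subsequence of $S_{k,n}C_j^\varepsilon$ hits the ball gives no control over $S_{k,n_l}C_j^\varepsilon$.

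What actually makes the block construction work --- and what you are missing --- is the elementary but crucial monotonicity: Steiner symmetrization \emph{increases the inradius and decreases the circumradius}. Hence once $(1-\varepsilon)D\subset S_{k,n}C_j\subset(1+\varepsilon)D$ is achieved at some $n$, it persists for all later $n$, regardless of which directions are appended in subsequent blocks. This is exactly how the paper proceeds (and is the content of your vague ``drift away only in a controlled manner'', which you then abandon). With this in hand your interleaved-block construction does give full convergence on the countable family, the squeeze goes through, and Theorem~\ref{thm:equiv} finishes the proof. The paper organises the same idea slightly differently, working with finite $\varepsilon$-nets of the compact sets $\mathcal{K}_d(R)$ and the $1$-Lipschitz property of $S_u$ for the Nikod\'ym distance (Lemma~\ref{lem:Nykodim1Lip}) rather than inclusion monotonicity, but the indispensable step is the same inradius/circumradius monotonicity.
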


\begin{proof}
Let $R>0$. The set $\mathcal{K}_{d}(R)$ of convex bodies having the same volume as the unit ball $D$ and whose circumradius is smaller than $R$ is compact in $(\mathcal{K}_{d},d_{H})$. Given $\varepsilon>0$, we consider a finite $\varepsilon$-net of $\mathcal{K}_{d}(R)$, say $A_{1},\ldots,A_{m}$ with $m=m(\varepsilon,R)$. The result of \cite{BKLYZ} applied to $A_1$ ensures the existence of directions $u_1,\ldots,u_{n_1}$ of $T$ such that
\begin{equation}
\label{incl1}
(1-\varepsilon)D \subset S_{n_1}A_1\subset (1+\varepsilon)D ~.
\end{equation}
Applied to $S_{n_1}A_2$, it provides directions $u_{n_1+1},\ldots,u_{n_2}$ of $T$ such that 
$$
(1-\varepsilon)D \subset S_{n_1+1, n_2}(S_{n_1}A_2)= S_{n_2}A_2 \subset (1+\varepsilon)D ~.
$$
Steiner symmetrization increases inradius and decreases circumradius. So statement (\ref{incl1}) becomes:
$$
(1-\varepsilon)D \subset S_{n_2}A_1\subset (1+\varepsilon)D ~.
$$
Hence, we obtain by induction a sequence of $n=n(\varepsilon,R)$ directions $\{u_1,\ldots,u_{n}\}$ of $T$ satisfying for $i=1,\ldots,m$
\begin{equation}
\label{eq1}
(1-\varepsilon)D \subset S_{n}A_i \subset (1+\varepsilon)D ~.
\end{equation}
Let $A$ be a convex body belonging to $\mathcal{K}_{d}(R)$. Let $A_{i_{0}}$ be an element of the $\varepsilon$-net of $\mathcal{K}_{d}(R)$ such that $d_H(A, A_{i_0}) < \varepsilon$. Recall the Nikod\'ym distance $d_{N}$ generates on $\mathcal{K}_{d}$ the same topology as $d_{H}$ (see Appendix \ref{appendix}). Inclusions (\ref{eq1}), Lemmas \ref{lem:Nykodim1Lip} and \ref{lem:Nyk<Hausdorff} imply
\begin{eqnarray}
\label{StrategyCepsilon}
d_N(S_{n}A, D) & \leq & d_N(S_{n}A, S_{n}A_{i_0}) + d_N(S_{n}A_{i_0} , D) \nonumber\\
& \leq & d_N(A, A_{i_0}) + C d_H(S_{n}A_{i_0} , D) \nonumber\\
& \leq & C \varepsilon ~,
\end{eqnarray}
where $C=C(d,R)$ is a positive constant. Now, given a decreasing sequence $(\varepsilon_k)_{k\geq 1}$ tending to $0$, we apply the previous strategy to each term $\varepsilon_k$ in order to get some directions, say $u_{n_k+1},\ldots,u_{n_{k+1}}$ satisfying:
$$
d_N(S_{n_k+1,\,n_{k+1}}A, D) \leq C \varepsilon_k ~.
$$
Note this inequality holds for any $A\in\mathcal{K}_{d}(R)$ and the above constant $C$ is the same as in (\ref{StrategyCepsilon}). Concatenating the blocks $\{u_{n_k+1},\ldots,u_{n_{k+1}}\}$, $k\geq 1$, we build a sequence $(u_{n})_{n\geq 1}$ strongly $S-$rounding any convex bodies with the same volume as $D$:
$$
d_N(S_{l,m} A, D) \leq d_N(S_{n_k+1,\,n_{k+1}}(S_{l,n_k}A), D) \leq C(d,R(A)) \varepsilon_k
$$
whenever $l\geq n_{k}$ and $m\geq n_{k+1}$. Finally, we can affirm that $(u_{n})_{n\geq 1}$ is universal thanks to the identity $S_{u}(rA)=rS_{u}A$ and Theorem \ref{thm:equiv}.
\end{proof}

A sequence $(u_{n})_{n\geq 1}$ of $\mathbb{S}^{1}$ is said \textit{uniformly distributed} on $\mathbb{S}^{1}$ if, for any arc $I$ of the unit disc,
$$
\lim_{m\to\infty} \frac{1}{m} \textrm{Card} \left\{ n\leq m , \; u_{n} \in I \right\} = \sigma(I)
$$
where $\sigma$ denotes the Haar probability measure on $\mathbb{S}^{1}$. In \cite{BBGV}, a uniformly distributed sequence $(u_{n})_{n\geq 1}$ on $\mathbb{S}^{1}$ is exhibited (see Section 5) which does not $S-$round a certain convex body (see Example 2.1 in Section 2). By Theorem \ref{thm:equiv}, this sequence is not universal. In other words,

\begin{prop}
\label{prop:nonCV}
There exist a uniformly distributed sequence $(u_{n})_{n\geq 1}$ on $\mathbb{S}^{1}$ and a convex body $A$ such that $(u_{n})_{n\geq 1}$ does not $M-$round $A$.
\end{prop}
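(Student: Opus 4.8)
The plan is to combine three ingredients that are all already available at this point in the paper: the existence (from \cite{BBGV}) of a uniformly distributed sequence $(u_n)_{n\geq 1}$ on $\mathbb{S}^1$ together with a convex body $A$ that this sequence does \emph{not} $S$-round; the contrapositive of Theorem \ref{thm:equiv}; and the observation that Theorem \ref{thm:equiv} is stated for \emph{universality}, i.e.\ for \emph{strong} rounding of \emph{all} bodies, whereas the assertion we want is about $M$-rounding of a \emph{single} body $A$. So the first step is to make sure we actually have an obstruction to universality, not merely to rounding one body: since $(u_n)$ fails to $S$-round $A$, it is in particular not $S$-universal, hence by Theorem \ref{thm:equiv} it is not $M$-universal.

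The remaining — and genuinely delicate — step is to upgrade ``not $M$-universal'' to the concrete conclusion ``there is a body $A'$ that is not $M$-rounded by $(u_n)$'' (where a priori $A'$ need not equal $A$, and failure could be of the \emph{strong} kind, i.e.\ for some shift $(u_{k+n})_n$ rather than the sequence itself). There are two ways to handle this. The cleaner one is to revisit the Example~2.1/Section~5 construction of \cite{BBGV}: in dimension $2$, $S_u$ acts on each convex body, and the counterexample there is built from an explicit non-convergence phenomenon of successive Steiner symmetrals; I would check that the same mechanism obstructs $M$-rounding directly, or that the obstruction is robust under shifting the sequence. The other route is to argue softly: if $(u_n)$ $M$-rounded \emph{every} body and did so for every shift, it would be $M$-universal, hence $S$-universal, contradicting the failure to $S$-round $A$; so for some body $A'$ and some shift $k$, $(u_{k+n})_n$ does not $M$-round $A'$; then replace $(u_n)$ by this shifted sequence, which is still uniformly distributed on $\mathbb{S}^1$ (a shift does not change asymptotic frequencies of arcs), and rename $A'$ as $A$. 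This gives exactly the statement of Proposition~\ref{prop:nonCV}.

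The main obstacle is the last paragraph's bookkeeping: Theorem \ref{thm:equiv} only transfers \emph{universality}, which is an ``all bodies, all shifts'' statement, so its contrapositive only yields ``some body, some shift''. One must then use shift-invariance of uniform distribution to absorb the shift, and simply accept that the witnessing body may not be the original $A$ from \cite{BBGV}. If instead one wants the \emph{same} body $A$, a closer look at the \cite{BBGV} construction is required — in particular whether their non-$S$-rounding of $A$ is already ``strong'' and whether the inclusion $S_uA\subset B_uA$ can be leveraged to turn non-$S$-convergence into non-$M$-convergence for that specific $A$ — and that analysis is where the real work sits.

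\begin{proof}
By \cite{BBGV} there exist a uniformly distributed sequence $(u_n)_{n\geq 1}$ on $\mathbb{S}^1$ and a convex body $A_0$ such that $(u_n)_{n\geq 1}$ does not $S$-round $A_0$; in particular $(u_n)_{n\geq 1}$ is not $S$-universal. By Theorem \ref{thm:equiv}, $(u_n)_{n\geq 1}$ is not $M$-universal either. Hence there exist an integer $k\geq 0$ and a convex body $A\in\mathcal{K}_2$ such that the shifted sequence $(u_{k+n})_{n\geq 1}$ does not strongly $M$-round $A$, and in fact (after enlarging $k$ if necessary) does not $M$-round $A$. Finally, shifting a sequence does not affect the asymptotic frequency of any arc of $\mathbb{S}^1$, so $(u_{k+n})_{n\geq 1}$ is again uniformly distributed on $\mathbb{S}^1$. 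Renaming $(u_{k+n})_{n\geq 1}$ as $(u_n)_{n\geq 1}$ yields the claimed pair.
\end{proof}
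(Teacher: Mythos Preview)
Your argument is correct and follows the same route as the paper: invoke the \cite{BBGV} counterexample to obtain a uniformly distributed sequence that is not $S$-universal, apply Theorem~\ref{thm:equiv} to deduce it is not $M$-universal, and extract a witnessing body. The paper actually leaves the shift bookkeeping implicit (it simply writes ``In other words'' before stating the proposition), whereas you spell out why one may need to pass to a shifted sequence and why that shift preserves uniform distribution; so your version is, if anything, more careful on this point.
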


\section{Random Minkowski symmetrizations}
\label{sect:RandomMinkowski}

Let $A$ be a convex compact set in $\mathbb{R}^{d}$. The goal of this section is to state a rate of convergence for
$$
B_{n}A = B_{U_{n}}(\ldots B_{U_{2}}(B_{U_{1}} A)\ldots)
$$
to $L(A)D$ when the random directions $U_{k}\in\mathbb{S}^{d-1}$, $k\geq 1$, are independent.

\subsection{Rate of convergence}
\label{sect:RateMinkowski}

Let $\sigma$ be the Haar probability measure on $\mathbb{S}^{d-1}$.

\begin{thm}
\label{thm:Minkowski}
Assume that, for any $k\geq 1$, the distribution $\nu_{k}$ of $U_k$ is absolutely continuous with respect to $\sigma$ and its density satisfies
\begin{equation}
\label{hypodistrib}
\frac{d\nu_{k}}{d\sigma}(u) \leq \alpha < \frac{d}{d-1}
\end{equation}
for some $\alpha>0$ and $\sigma-$a.e. $u\in\mathbb{S}^{d-1}$. Then, there exists a constant $c>0$ such that, with probability $1$,
\begin{equation}
\label{rateMinkowsi}
\exists n_{0}(\omega) , \; \forall n \geq n_{0} , \; d_{H}\left( B_{n}A , L(A)D \right) \leq e^{-cn} ~.
\end{equation}
Furthermore, the first random integer $n_{0}$ from which the above inequality holds admits exponential moments.
\end{thm}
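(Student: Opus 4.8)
The plan is to reduce the statement to the (strict) contraction property of Minkowski symmetrization announced in Proposition \ref{prop:contraction}, and to quantify how fast the contraction drives $B_nA$ towards the ball $L(A)D$. Since Minkowski symmetrization preserves the mean radius (see (\ref{MeanRadiusInvariance})) and increases the volume (see (\ref{MeanRadVol})), the natural monovariant to track is a functional measuring the ``defect of sphericity'' of $B_nA$, for instance $\|f_{B_nA}-L(A)\|_{L^2(\sigma)}^2$ (equivalently, after subtracting the constant $L(A)$, the $L^2$-norm of the nonconstant part of the support function). I expect Proposition \ref{prop:contraction} to say precisely that applying $B_u$ multiplies this quantity by a factor which, once averaged over $u$ drawn from a distribution satisfying (\ref{hypodistrib}), is at most some $\kappa=\kappa(d,\alpha)<1$; the hypothesis $\alpha<d/(d-1)$ should be exactly what guarantees $\kappa<1$ uniformly in $k$ (spherical harmonics: the worst eigenvalue for reflection through $u^\perp$, averaged against a density bounded by $\alpha$, stays below $1$).

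First I would fix the functional $\Phi(K)=\int_{\mathbb{S}^{d-1}}(f_K-L(K))^2\,d\sigma$, note $\Phi$ is invariant under rotation and continuous for $d_H$, and observe that $\Phi(K)=0$ iff $K$ is a centered ball. Conditioning on $U_1,\dots,U_{k-1}$ and using Proposition \ref{prop:contraction} applied to $K=B_{k-1}A$ with the direction $U_k\sim\nu_k$, I would obtain $\E[\Phi(B_kA)\mid \mathcal{F}_{k-1}]\le \kappa\,\Phi(B_{k-1}A)$ with $\kappa<1$ not depending on $k$. Iterating and taking expectations gives $\E[\Phi(B_nA)]\le \kappa^{\,n}\Phi(A)$. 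Hence $Y_n:=\kappa^{-n}\Phi(B_nA)$ is a nonnegative supermartingale with bounded expectation; by Markov's inequality and Borel--Cantelli (along $\kappa^{-n/2}$, say), $\Phi(B_nA)\le \kappa^{\,n/2}$ eventually, almost surely.

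Next I would convert this $L^2$-control of the support function into a Hausdorff bound. Since all $B_nA$ lie in the fixed compact $\{K:K\subset R(A)D\}$ (Lemma \ref{lem:BasicProp}(ii)) and contain a ball of radius $L(A)$ in every step's ``width'' sense, an interpolation/Sobolev-type inequality on the sphere bounds $\|f_{B_nA}-L(A)\|_\infty$ — hence $d_H(B_nA,L(A)D)$ via the standard identification of support functions with Hausdorff distance (Lemma \ref{lem:HausdorffSupportFct}) — by a fixed power of $\Phi(B_nA)^{1/2}$ times a constant depending only on $d$ and $R(A)$; this is the content of the inequalities collected in Appendix \ref{appendix}. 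Combining with the previous paragraph yields $d_H(B_nA,L(A)D)\le e^{-cn}$ for all $n\ge n_0(\omega)$, with $c>0$ depending on $d,\alpha$ (and the exponent loss from the Sobolev step). Finally, for the exponential moments of $n_0$: from $\P(n_0>n)\le \sum_{m\ge n}\P(\Phi(B_mA)>\kappa^{m/2})\le \sum_{m\ge n}\kappa^{-m/2}\E[\Phi(B_mA)]\le C\sum_{m\ge n}\kappa^{m/2}$, which decays geometrically in $n$, so $\E[e^{\beta n_0}]<\infty$ for small $\beta>0$.

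The main obstacle, and the step I would spend the most care on, is getting a \emph{uniform-in-$k$} contraction factor $\kappa<1$ out of (\ref{hypodistrib}): one must check that the average over a single symmetrization, with density bounded by $\alpha<d/(d-1)$, strictly contracts each nontrivial spherical-harmonic component — the threshold $d/(d-1)$ is sharp because the first-order (degree-one) harmonics, associated with translations of the body, are the delicate ones, and only the centering built into $B_u$ together with the density bound saves strict contraction there. This is exactly what Proposition \ref{prop:contraction} is designed to provide, so here I would simply invoke it; the remaining work (the supermartingale/Borel--Cantelli argument and the $L^2\to L^\infty$ passage) is routine given the appendix lemmas.
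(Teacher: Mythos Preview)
Your proposal is essentially the paper's own proof: iterate Proposition~\ref{prop:contraction} (which gives the factor $(d-1)/d$ for uniform $U$, hence $\kappa=\alpha(d-1)/d<1$ under the density bound~(\ref{hypodistrib})) to obtain $\E\|h_{B_nA}\|_2^2\le\kappa^n\|h_A\|_2^2$, then apply Markov and Borel--Cantelli, and finally pass from $L^2$ to $L^\infty$ on the support function to control $d_H(B_nA,L(A)D)=\|h_{B_nA}\|_\infty$. The only correction is that the $L^2\to L^\infty$ step is Lemma~\ref{lem:InfiniNorme} in the main text (a Lipschitz/cone-volume bound $\|h\|_\infty^d\le C\|h\|_2$), not the Appendix, which instead compares $d_H$ and $d_N$.
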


\begin{rem}
Let us compare our result with Klartag's one. Theorem 1.3 of \cite{K} states that for any $n$, there exists $n$ Minkowski symmetrizations transforming any convex body $A$ into a convex body $A_{n}$ whose distance to $L(A)D$ is smaller than $e^{-\delta n}$ (where $\delta$ is a positive constant). Theorem \ref{thm:Minkowski} offers an advantage with respect to Klartag's result: whereas only one (implicit) sequence of $n$ directions suits in Theorem 1.3 of \cite{K}, almost every realization of $(U_{1},\ldots,U_{n})$ satisfies statement (\ref{rateMinkowsi}).\\
The exponential decrease holds for any $n$ in Theorem 1.3 of \cite{K} and only from a random integer in Theorem \ref{thm:Minkowski}. However, this latter admits exponential moments.
\end{rem}

\begin{rem}
It is worth pointing out here that any real number $c$ such that
$$
0 < c < -\frac{1}{2d} \log \frac{\alpha(d-1)}{d}
$$
satisfies statement (\ref{rateMinkowsi}). See the proof of Theorem \ref{thm:Minkowski} for details.
\end{rem}

\begin{rem}
Finally, let us remark Theorem \ref{thm:Minkowski} still holds when the volume of $A$ is null.
\end{rem}

Let $h_{A}$ be the centered support function of $A$:
\begin{equation}
\label{h_A}
h_{A} = f_{A} - L(A) ~.
\end{equation}
Proposition \ref{prop:contraction} is the heart of the proof of Theorem \ref{thm:Minkowski}. It essentially says that $h_{B_{U}A}$ is a contraction when the random direction $U$ is uniformly distributed on $\mathbb{S}^{d-1}$. The proof of Proposition \ref{prop:contraction} is rather technical and is addressed in Section \ref{sect:PropContraction}.

\begin{prop}
\label{prop:contraction}
Let $U$ be a random variable of $\mathbb{S}^{d-1}$ with distribution $\sigma$. Then,
\begin{equation}
\label{contractionMink}
\E \| h_{B_{U}A} \|_{2}^{2} \leq \frac{d-1}{d} \| h_{A} \|_{2}^{2} ~.
\end{equation}
\end{prop}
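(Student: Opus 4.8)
The plan is to work in the Hilbert space $L^2(\mathbb{S}^{d-1},\sigma)$ and expand everything in spherical harmonics. Recall that the support function of $\pi_u(A)$ satisfies $f_{\pi_u(A)}(\theta)=f_A(\pi_u(\theta))$, so by (\ref{SupportFunctionMink}) we have $h_{B_uA}(\theta)=\tfrac12\big(h_A(\theta)+h_A(\pi_u\theta)\big)$ (the constant $L(A)$ is unaffected since $\pi_u$ is an isometry of the sphere). Thus $\|h_{B_uA}\|_2^2=\tfrac14\|h_A+h_A\circ\pi_u\|_2^2=\tfrac12\|h_A\|_2^2+\tfrac12\langle h_A, h_A\circ\pi_u\rangle$, using that $\pi_u$ preserves $\sigma$. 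Averaging over $u$ with distribution $\sigma$, the claim (\ref{contractionMink}) reduces to showing
\begin{equation}
\label{eq:reduction}
\int_{\mathbb{S}^{d-1}} \langle h_A, h_A\circ\pi_u\rangle\, d\sigma(u) \;\leq\; \frac{d-2}{d}\,\|h_A\|_2^2 .
\end{equation}

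Next I would decompose $h_A=\sum_{k\geq 0} h_A^{(k)}$ into its components $h_A^{(k)}$ in the space $\mathcal{H}_k$ of degree-$k$ spherical harmonics. Two facts make this tractable: first, $\pi_u$ commutes with the Laplace--Beltrami operator, so it preserves each $\mathcal{H}_k$ and hence $\langle h_A^{(j)}, h_A^{(k)}\circ\pi_u\rangle=0$ for $j\neq k$; second, the operator $T_kh:=\int_{\mathbb{S}^{d-1}} (h\circ\pi_u)\,d\sigma(u)$ restricted to $\mathcal{H}_k$ is rotation-equivariant, hence by Schur's lemma it is a scalar multiple $\lambda_k$ of the identity on the irreducible module $\mathcal{H}_k$. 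So (\ref{eq:reduction}) becomes $\sum_k \lambda_k\|h_A^{(k)}\|_2^2\leq \tfrac{d-2}{d}\sum_k\|h_A^{(k)}\|_2^2$. Then I must (a) compute or bound each $\lambda_k$, and (b) exploit that support functions have a trivial $\mathcal{H}_1$-component after centering is irrelevant — actually the key structural input is that $h_A$ has no first-order harmonic component only up to translation, so more carefully: the eigenvalue $\lambda_0=1$ on constants is killed because $h_A\perp\mathcal{H}_0$ (as $\int h_A\,d\sigma = 0$ by definition of $L(A)$), and one shows $\lambda_1=1$ as well but $h_A$ need not be orthogonal to $\mathcal{H}_1$; here one uses that translating $A$ changes only the $\mathcal{H}_1$-part of $f_A$ and does not change $B_uA$ up to the same translation, so WLOG $h_A\perp\mathcal{H}_1$ too. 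For $k\geq 2$ one needs $\lambda_k\leq \tfrac{d-2}{d}$.

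To compute $\lambda_k$, evaluate $T_k$ on a convenient zonal harmonic: fixing a pole $e$, reflection $\pi_u$ sends $e$ to $e-2\langle e,u\rangle u$, so $\langle \pi_u e, e\rangle = 1-2\langle e,u\rangle^2$. The Gegenbauer/Legendre addition formula gives $\lambda_k = \int_{\mathbb{S}^{d-1}} P_k\big(1-2\langle e,u\rangle^2\big)\,d\sigma(u) / P_k(1)$, which is a one-dimensional integral of a Gegenbauer polynomial against the density of $t=\langle e,u\rangle$ on $[-1,1]$. One computes $\lambda_1 = \int (1-2t^2)\,d\mu(t) = 1 - 2\cdot\tfrac1d = \tfrac{d-2}{d}$, since $\int t^2\,d\mu = 1/d$, and more generally a recursion or the explicit value of these integrals shows $|\lambda_k|\leq \lambda_2 = \tfrac{d-2}{d}$ for all $k\geq 2$ — this monotone decay of $\lambda_k$ in $k$ (after the $k=1$ value) is the crucial estimate.

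The main obstacle is step (b) together with the eigenvalue bound: rigorously identifying $T_k$ as scalar via representation theory, and then establishing $\lambda_k\leq\tfrac{d-2}{d}$ uniformly for $k\geq 1$ (equivalently $k\geq 2$ after removing translations). The naive integral $\int P_k(1-2t^2)\,d\mu(t)$ does not obviously have a sign-definite, monotone behavior in $k$, so I expect the bulk of the technical work — presumably the "rather technical" content deferred to Section \ref{sect:PropContraction} — to be a careful analysis of these Gegenbauer integrals, likely via a recurrence relating $\lambda_{k+1}$ to $\lambda_k$ and $\lambda_{k-1}$, or via a generating-function identity, to show they never exceed the $k=1$ (equivalently $k=2$) value $\tfrac{d-2}{d}$. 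Once that uniform bound is in hand, summing over $k$ and feeding back through the reduction (\ref{eq:reduction}) yields (\ref{contractionMink}) immediately.
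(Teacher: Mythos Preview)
Your spherical-harmonic framework is exactly the paper's approach, and the reduction to bounding the eigenvalues $\lambda_k$ of $h\mapsto\int h\circ\pi_u\,d\sigma(u)$ on each $\mathcal{H}_k$ is correct. However, your handling of small $k$ is muddled: you first assert $\lambda_1=1$ and invoke a translation argument to discard $\mathcal{H}_1$, but that argument is false (translating $A$ by $a$ sends $B_uA$ to $B_uA+\tfrac12(a+\pi_ua)$, not $B_uA+a$), and in any case you then correctly compute $\lambda_1=(d-2)/d$ in the next paragraph --- so no special handling of $k=1$ is needed at all. You also misstate $\lambda_2=(d-2)/d$; in fact $\lambda_k=\dfrac{d-2}{d-2+2k}$, which is strictly decreasing in $k$, so the supremum over $k\geq 1$ is $\lambda_1=(d-2)/d$, precisely the bound required in (\ref{eq:reduction}).

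The genuine difference lies in how that formula is obtained. You propose to evaluate $\lambda_k=\int P_k(1-2t^2)\,d\mu(t)/P_k(1)$ directly via Gegenbauer identities or recurrences, and you expect this to be the bulk of the technical work. The paper sidesteps the integral completely: it observes that $B_vg=\tfrac12(g+g\circ\pi_v)$ is exactly the orthogonal projection of $g\in\mathcal{H}_k$ onto the subspace $\mathcal{H}_k^v=\{g\in\mathcal{H}_k:g\circ\pi_v=g\}$, and then, averaging over the orthogonal group (via the Haar measure on $\mathcal{O}(d)$ and a lemma from \cite{K}), obtains
\[
\E\|B_Ug\|_2^2 \;=\; \frac{\dim\mathcal{H}_k^v}{\dim\mathcal{H}_k}\,\|g\|_2^2 \;=\; \frac{d-2+k}{d-2+2k}\,\|g\|_2^2
\]
from the known dimension formulas $\dim\mathcal{H}_k=\tfrac{d-2+2k}{d-2+k}\binom{d+k-2}{d-2}$ and $\dim\mathcal{H}_k^v=\binom{d+k-2}{d-2}$. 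This yields $\lambda_k$ exactly, with no recursive or asymptotic analysis of special-function integrals. Your Gegenbauer route would recover the same number, but the projection/dimension argument is shorter and more transparent.
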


Inequality (\ref{contractionMink}) is actually an equality when $d=2$ and $d\to\infty$. The case $d=2$ is treated at the beginning of Section \ref{sect:PropContraction}. In higher dimension, a vector $U$ chosen uniformly on $\mathbb{S}^{d-1}$ is (almost) orthogonal to a given $v$ with large probability:
$$
\pi_{U}(v) = v - 2 \langle v , U \rangle U
$$
is close to $v$ with a probability tending to $1$. We can then check that $\E\|h_{B_{U}A}\|_{2}^{2}$ is larger than $\|h_{A}\|_{2}^{2}-o(1)$ as $d\to\infty$.\\

Theorem \ref{thm:Minkowski} derives from Proposition \ref{prop:contraction} and the Borel-Cantelli lemma.

\begin{proof}[Proof of Theorem \ref{thm:Minkowski}.]
Let $\rho_{k}$ be the probability density function of $\nu_{k}$ with respect to $\sigma$ and $\alpha_{d}=\frac{\alpha(d-1)}{d}$. Hypothesis (\ref{hypodistrib}) and Proposition \ref{prop:contraction} applied to $B_{U_{1}}A$ thus $A$, imply:
\begin{eqnarray*}
\E \| h_{B_{2}A} \|_{2}^{2} & = & \int_{\mathbb{S}^{d-1}} \left( \int_{\mathbb{S}^{d-1}} \| h_{B_{u_{2}}(B_{u_{1}}A)} \|_{2}^{2} \rho_{2}(u_{2}) d\sigma(u_{2}) \right) \rho_{1}(u_{1}) d\sigma(u_{1}) \\
& \leq & \alpha_{d} \int_{\mathbb{S}^{d-1}} \| h_{B_{u_{1}}A} \|_{2}^{2} \rho_{1}(u_{1}) d\sigma(u_{1}) \\
& \leq & \alpha_{d}^{2} \| h_{A} \|_{2}^{2} ~.
\end{eqnarray*}
By induction, it follows that for any integer $n$,
$$
\E \| h_{B_{n}A} \|_{2}^{2} \leq \alpha_{d}^{n} \| h_{A} \|_{2}^{2} ~.
$$
Lemma \ref{lem:InfiniNorme} below allows to upperbound the expectation of the $L_{\infty}$ norm of $h_{B_{n}A}$. Indeed,
\begin{eqnarray*}
\E \| h_{B_{n}A} \|_{\infty}^{d} & \leq & z_{d} \E \| h_{B_{n}A} \|_{2} \\
& \leq & z_{d} \sqrt{\E \| h_{B_{n}A} \|_{2}^{2}} \\
& \leq & z_{d} \alpha_{d}^{n/2} \| h_{A} \|_{2} ~,
\end{eqnarray*}
where $z_{d}$ denotes the constant in the right-hand side of (\ref{InfiniNorme}). Hence,
\begin{equation}
\label{EspNormeInfini}
\E \| h_{B_{n}A} \|_{\infty} \leq \left( z_{d} \| h_{A} \|_{2} \alpha_{d}^{n/2} \right)^{1/d} ~.
\end{equation}
Markov's inequality and (\ref{EspNormeInfini}) give
\begin{equation}
\label{ProbaNormeInfini}
\P (\| h_{B_{n}A} \|_{\infty} > r^{n}) \leq r^{-n} \E \| h_{B_{n}A} \|_{\infty} \leq \left( z_{d} \| h_{A} \|_{2} \right)^{1/d} \left( r^{-1} \alpha_{d}^{1/2d} \right) ^{n} ~.
\end{equation}
The real number $r>0$ can be chosen such that $\alpha_{d}^{1/2d}<r<1$ by hypothesis (\ref{hypodistrib}). Then, the Borel-Cantelli lemma applies; a.s. for $n$ large enough, $\| h_{B_{n}A} \|_{\infty}$ is smaller than $r^{n}$. Statement (\ref{rateMinkowsi}) follows from the identity
$$
d_{H}\left( B_{n}A , L(A)D \right) = \| h_{B_{n}A} \|_{\infty} ~.
$$
Finally, let us denote by $n_{0}$ the first (random) integer from which the Hausdorff distance between $B_{n}A$ and $L(A)D$ is smaller than $r^{n}$. We deduce from (\ref{ProbaNormeInfini}) that $n_{0}$ admits exponential moments:
$$
\P ( n_{0} > m ) \leq \left( z_{d} \| h_{A} \|_{2} \right)^{1/d} \left( r^{-1} \alpha_{d}^{1/2d} \right) ^{m} ~.
$$
\end{proof}

In order to optimize the rate of convergence with respect to the dimension $d$ in Theorem 1.3 of \cite{K}, Klartag uses technical lemmas to go from $L_{2}$ norm to $L_{\infty}$ norm (see Section 4 of \cite{K}). Here, we only focus our attention on the parameter $n$. So, the following basic result will be suitable.

\begin{lem}
\label{lem:InfiniNorme}
Recall that $R(A)$ denotes the circumradius of $A$. Then, for any integer $n\geq 1$,
\begin{equation}
\label{InfiniNorme}
\| h_{B_{n}A} \|_{\infty}^{d} \leq \frac{d R(A)^{d-1}}{\kappa_{d-1}} \| h_{B_{n}A} \|_{2} ~.
\end{equation}
\end{lem}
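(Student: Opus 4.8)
The plan is to relate the $L_\infty$ norm of $h_{B_nA}$ to its $L_2$ norm by exploiting the Lipschitz regularity of support functions together with a volume (integration) argument on a spherical cap. Write $g = h_{B_nA} = f_{B_nA} - L(A)$, and observe first that $B_nA$ is a convex body contained in the centered ball $R(A)D$ by Lemma \ref{lem:BasicProp} $(ii)$ (circumradius is non-increasing under both symmetrizations), so $\|f_{B_nA}\|_\infty \le R(A)$ and hence $g$ is Lipschitz on $\mathbb{S}^{d-1}$ with constant bounded in terms of $R(A)$: support functions of bodies inside $R(A)D$ are restrictions of $R(A)$-Lipschitz (indeed, support functions of the whole $\mathbb R^d$ are positively homogeneous with the sublinear/Lipschitz bound governed by the circumradius), so $g$ too is Lipschitz with a constant $\le 2R(A)$ or so. Actually, since $L(A)$ is a constant, $g$ inherits exactly the Lipschitz constant of $f_{B_nA}$, which on $\mathbb{S}^{d-1}$ is at most $R(A)$ up to the intrinsic-metric comparison.

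Next I would localize. Let $\theta_0 \in \mathbb{S}^{d-1}$ be a point where $|g|$ attains its maximum $M := \|g\|_\infty$ (it exists by continuity and compactness). By the Lipschitz estimate, $|g(\theta)| \ge M/2$ for all $\theta$ in the geodesic cap $C$ of radius $\frac{M}{2\ell}$ around $\theta_0$, where $\ell$ is the Lipschitz constant (proportional to $R(A)$). Then
\begin{equation*}
\|g\|_2^2 \ge \int_C g^2\, d\sigma \ge \frac{M^2}{4}\, \sigma(C).
\end{equation*}
The measure of a geodesic cap of small radius $\varrho$ on $\mathbb{S}^{d-1}$ is, up to constants, $\kappa_{d-1}\varrho^{d-1}$ (the leading term being the volume of a $(d-1)$-dimensional ball of radius $\varrho$, normalized by the total surface area, which is where the constant $\kappa_{d-1}$ and the factor involving $d$ enter — recall $v_d = \kappa_d$ and the surface area of $\mathbb{S}^{d-1}$ equals $d\kappa_d$). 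Choosing constants carefully so the cap radius is genuinely bounded (one may first dispose of the trivial case where $M$ is already large, since the inequality is easy when $M \ge$ some multiple of $R(A)$ — in that regime $\|g\|_2$ is comparably large too, or one can simply note the claimed inequality is scale-homogeneous and reduce to $R(A)=1$), this yields $\|g\|_2^2 \gtrsim M^{d+1}/R(A)^{d-1}$, i.e. $M^d \lesssim R(A)^{d-1}\|g\|_2$, with the explicit constant $d/\kappa_{d-1}$ as stated once the cap-volume estimate and the Lipschitz constant are inserted with the sharp numerology.

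The main obstacle is getting the constant exactly right: one must track the precise Lipschitz constant of $h_{B_nA}$ on $\mathbb{S}^{d-1}$ (equivalently, the modulus of continuity of the support function of a body inside $R(A)D$ with respect to the spherical, not chordal, metric), and one must use a clean lower bound for the measure of a geodesic cap that gives $\kappa_{d-1}$ rather than just "$c_d$". A convenient route for the latter is to project the cap onto the tangent hyperplane at $\theta_0$ and compare with a flat $(d-1)$-ball, or to integrate the cap-measure formula directly; either way the extremal/worst-case constant corresponds to the cap being as small as the bound forces, and the factor $d$ appears as the ratio between the full sphere measure and $d\kappa_d$. Everything else — existence of the maximizer, the $M/2$ bound on the cap, Chebyshev-type lower bound on $\|g\|_2^2$ — is routine. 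Since the statement is for general $A$ (we never used the symmetrization structure beyond "$B_nA$ is a convex body of circumradius $\le R(A)$"), it is really a statement about arbitrary convex bodies, and I would state the lemma in that generality if the constant bookkeeping becomes cleaner.
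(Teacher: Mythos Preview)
Your approach is essentially the paper's: exploit that $h_{B_nA}$ is $R(A)$-Lipschitz on $\mathbb{S}^{d-1}$ (since $B_nA\subset R(A)D$), localize near a maximizer $\theta_0$, and bound a norm from below on a small cap. The paper executes this one step differently, and more cleanly: rather than bounding $\|g\|_2^2$ on the cap $\{|g|\ge M/2\}$, it bounds $\|g\|_1$ on the full cap of radius $M/R(A)$ using the linear lower envelope $|g(\theta)|\ge M-R(A)\,\mathrm{dist}(\theta,\theta_0)$. That integral is exactly the volume of a right cone of height $M$ over a $(d{-}1)$-ball of radius $M/R(A)$, namely $\kappa_{d-1}M^d/(d\,R(A)^{d-1})$; then $\|g\|_2\ge\|g\|_1$ finishes. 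This route produces the correct power of $M$ and the stated constant in one line, and shows that the factor $d$ is the cone-volume factor $1/d$, not a sphere-normalization artefact.

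Your version has a genuine gap at the last step. From $\|g\|_2^2\gtrsim M^{d+1}/R(A)^{d-1}$ it does \emph{not} follow that $M^d\lesssim R(A)^{d-1}\|g\|_2$; the exponents simply do not match (taking square roots gives $M^{(d+1)/2}\lesssim R(A)^{(d-1)/2}\|g\|_2$, a different inequality). The missing ingredient is the trivial bound $\|g\|_2\le\|g\|_\infty=M$ (valid because $\sigma$ is a probability measure), which yields $M\|g\|_2\ge\|g\|_2^2\gtrsim M^{d+1}/R(A)^{d-1}$ and hence the desired estimate --- but you never invoke it. Once that step is inserted your argument is correct, though the $M/2$-threshold route together with this extra inequality cannot recover the sharp constant $d/\kappa_{d-1}$. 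If you want the constant as stated, switch to the $L_1$/cone computation; if a dimensional constant suffices for your purposes, your argument (with the missing step) is fine.
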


\begin{proof}
Classical properties of support functions (namely positive homogeneity of degree $1$ and subadditivity; see the book of Gruber \cite{G} p.57) imply $f_{B_{n}A}$ can be extended to a Lipschitz function defined on the whole space $\mathbb{R}^{d}$. Its Lipschitz constant equals $\| f_{B_{n}A} \|_{\infty}$, i.e. its supremum over $\mathbb{S}^{d-1}$. Since all the $B_{n}A$'s are included in $R(A)D$, the $f_{B_{n}A}$'s are $R(A)-$Lipschitz functions. So do the functions $h_{B_{n}A}$, $n\geq 1$.\\
To conclude, it suffices to remark the $L_{1}$ norm of a $R(A)-$Lipschitz function $f$ defined on $\mathbb{S}^{d-1}$ is larger than the volume of a right cone with height $\| f \|_{\infty}$ over a $(d-1)-$dimensional ball with radius $R(A)^{-1} \| f \|_{\infty}$. In other words,
$$
\| f \|_{2} \geq \| f \|_{1} \geq \frac{\kappa_{d-1} \| f \|_{\infty}^{d}}{d R(A)^{d-1}} ~,
$$
where $\kappa_{d-1}$ denotes the volume of the $(d-1)-$dimensional unit ball. The searched result then follows.
\end{proof}

\subsection{Proof of Proposition \ref{prop:contraction}}
\label{sect:PropContraction}

Recall the support function $f_{B_{u}A}$ can be expressed as the arithmetic mean of $f_{A}$ and $f_{\pi_{u}A}$ (see (\ref{SupportFunctionMink})). Then, using the invariance of the Haar probability measure $\sigma$ under the application $v\mapsto\pi_{u}(v)$, for any $u\in\mathbb{S}^{d-1}$, the $L_{2}$ norm of $h_{B_{u}A}$ satisfies
$$
\| h_{B_{u}A} \|_{2}^{2} = \frac{1}{2} \| h_{A} \|_{2}^{2} + \frac{1}{2} \langle h_{A} , h_{\pi_{u}A} \rangle ~.
$$
Assume $U$ is distributed according to $\sigma$. By Fubini's theorem,
$$
\E \langle h_{A} , h_{\pi_{U}A} \rangle = \int_{\mathbb{S}^{d-1}} h_{A}(v) \left( \int_{\mathbb{S}^{d-1}} h_{A}(\pi_{u} v) d\sigma(u) \right) d\sigma(v) ~.
$$
(indeed $f_{\pi_{u}A}=f_{A}\circ\pi_{u}$). Now, when $d=2$, the probability measure $\sigma$ is also invariant under the application $J_{v}:u\mapsto\pi_{u}(v)$, for any $v\in\mathbb{S}^{1}$. So, the integral
$$
\int_{\mathbb{S}^{1}} h_{A}(\pi_{u} v) d\sigma(u)
$$
is null and so does $\E \langle h_{A} , h_{\pi_{U}A} \rangle$. To sum up, Proposition \ref{prop:contraction} is easily proved in dimension $d=2$ and
$$
\E \| h_{B_{U}A} \|_{2}^{2} = \frac{1}{2} \| h_{A} \|_{2}^{2} ~.
$$

However, this strategy does not hold whenever $d>2$. One can prove in this case that the image measure $\sigma J_{v}^{-1}$ admits a probability density function with respect to $\sigma$ which is unbounded in the vicinity of $v$.\\
Consequently, in order to prove Proposition \ref{prop:contraction}, we follow ideas of Klartag \cite{K} based on spherical harmonics.\\

In the rest of this section, we assume $d>2$. A polynomial $P$ defined on $\mathbb{R}^{d}$ is a \textit{homogeneous harmonic of degree} $k$ if $P$ is a homogeneous polynomial of degree $k$ and is harmonic (i.e. $\Delta P=0$). Let $\mathcal{S}_{k}$ be the following linear space:
$$
\mathcal{S}_{k} = \{ P_{|\mathbb{S}^{d-1}} , \; P \mbox{ is a homogeneous harmonic of degree } k \}
$$
where $P_{|\mathbb{S}^{d-1}}$ denotes the restriction of the polynomial $P$ to the sphere $\mathbb{S}^{d-1}$. The elements of $\mathcal{S}_{k}$ are called \textit{spherical harmonics of degree} $k$. The reader may refer to \cite{M} for complete references about spherical harmonics.

The linear space $L_{2}(\mathbb{S}^{d-1})$ admits the following orthogonal direct sum decomposition:
\begin{equation}
\label{OrthogonalDirectSum}
L_{2}(\mathbb{S}^{d-1}) = \bigoplus_{k\geq 0} \mathcal{S}_{k} ~.
\end{equation}
Let us write the centered support function $h_{A}$ according to (\ref{OrthogonalDirectSum}): $h_{A}=\sum g_{k}$. Thus,
\begin{equation}
\label{ExpansionBuA}
h_{B_{u}A} = \frac{1}{2} \left( h_{A} + h_{A}\circ\pi_{u} \right) = \sum_{k\geq 0} B_{u}g_{k} ~,
\end{equation}
where
$$
B_{u}g_{k} = \frac{1}{2} \left( g_{k} + g_{k}\circ\pi_{u} \right) ~.
$$
First, it is clear that $h_{A}$ is orthogonal to $\mathcal{S}_{0}$. So $g_{0}$ is null. Moreover, from $g_{k}\in\mathcal{S}_{k}$, some elementary computations give $g_{k}\circ\pi_{u}\in\mathcal{S}_{k}$. So does $B_{u}g_{k}$. Hence, (\ref{ExpansionBuA}) is the expansion of $h_{B_{u}A}$ into spherical harmonics, i.e. according to (\ref{OrthogonalDirectSum}). Assume $U$ is distributed according to the Haar probability measure $\sigma$. Then, the searched result follows from Lemma \ref{lem:ContractionSpherical} below and Pythagoras' theorem:
\begin{eqnarray*}
\E \| h_{B_{U}A} \|_{2}^{2} & = & \sum_{k\geq 1} \E \| B_{U}g_{k} \|_{2}^{2} \\
& = & \sum_{k\geq 1} \frac{d-2+k}{d-2+2k} \, \| g_{k} \|_{2}^{2} \\
& \leq & \frac{d-1}{d} \| h_{A} \|_{2}^{2}
\end{eqnarray*}
since $\frac{d-2+k}{d-2+2k}$ is smaller than $\frac{d-1}{d}$ for any $k\geq 1$.

\begin{lem}
\label{lem:ContractionSpherical}
Let $U$ be a random variable distributed according to $\sigma$. Let $k\geq 1$ and $g\in\mathcal{S}_{k}$. Then,
$$
\E \| B_{U}g \|_{2}^{2} = \frac{d-2+k}{d-2+2k} \, \| g \|_{2}^{2}
$$
where $B_{u}g = \frac{1}{2}(g+g\circ\pi_{u})$.
\end{lem}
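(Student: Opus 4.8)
The plan is to compute $\E\|B_Ug\|_2^2$ by expanding the square and using the reflection-invariance of $\sigma$. Writing $B_ug=\frac12(g+g\circ\pi_u)$ and using that $\pi_u$ is an isometry of the sphere preserving $\sigma$, we get $\|g\circ\pi_u\|_2=\|g\|_2$, hence
\begin{equation*}
\E\|B_Ug\|_2^2 = \frac12\|g\|_2^2 + \frac12\,\E\langle g, g\circ\pi_U\rangle .
\end{equation*}
So everything reduces to evaluating the cross term $\E_U\langle g,g\circ\pi_U\rangle$. By Fubini this equals $\int_{\mathbb S^{d-1}} g(v)\,\bigl(\int_{\mathbb S^{d-1}} g(\pi_u v)\,d\sigma(u)\bigr)\,d\sigma(v)$, so the heart of the matter is to understand the operator $T$ sending $g$ to the function $v\mapsto \int_{\mathbb S^{d-1}} g(\pi_u v)\,d\sigma(u)$.

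The key observation is that $T$ commutes with the action of the orthogonal group $O(d)$ on functions on the sphere: for any rotation $\rho$, $\pi_{\rho u}(\rho v)=\rho\,\pi_u(v)$, and by invariance of $\sigma$ under $u\mapsto\rho u$ one checks $T(g\circ\rho)=(Tg)\circ\rho$. Therefore $T$ preserves each irreducible component $\mathcal S_k$ of the decomposition (\ref{OrthogonalDirectSum}), and by Schur's lemma $T$ acts on $\mathcal S_k$ as a scalar $\lambda_k$. Thus $\E\langle g,g\circ\pi_U\rangle=\lambda_k\|g\|_2^2$ for $g\in\mathcal S_k$, and the lemma amounts to showing
\begin{equation*}
\frac12(1+\lambda_k) = \frac{d-2+k}{d-2+2k},\qquad\text{i.e.}\qquad \lambda_k = \frac{-k}{d-2+2k}.
\end{equation*}

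To pin down $\lambda_k$, I would test $T$ against a convenient spherical harmonic. The natural choice is a zonal (Legendre/Gegenbauer) harmonic $g(v)=C_k^{(\nu)}(\langle v,e\rangle)$ with $\nu=(d-2)/2$, or even simpler, evaluate $\lambda_k$ by computing $(Tg)$ at a single well-chosen point and dividing by $g$ at that point. Since $\pi_u v = v-2\langle v,u\rangle u$, one has $\langle \pi_u v, e\rangle = \langle v,e\rangle - 2\langle v,u\rangle\langle u,e\rangle$; substituting and integrating over $u$ uniformly on the sphere, the needed moments are the standard ones ($\int \langle v,u\rangle\langle u,e\rangle\,d\sigma(u)=\frac1d\langle v,e\rangle$, and so on), and one expands $C_k^{(\nu)}$ to extract the coefficient. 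An alternative, cleaner route: use the Funk–Hecke theorem, since $g\mapsto\int g(\pi_u v)\,d\sigma(u)$ can be rewritten as an integral of $g$ against a kernel depending only on $\langle u,v\rangle$ (parametrizing the reflection by that inner product), so Funk–Hecke directly gives $\lambda_k$ as a one-dimensional integral of $C_k^{(\nu)}$ against that kernel, which evaluates to the claimed ratio.

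The main obstacle is the explicit computation of the eigenvalue $\lambda_k$: establishing that $T$ is a multiplier is soft (representation theory or Funk–Hecke), but getting the exact value $-k/(d-2+2k)$ requires either a careful Funk–Hecke integral with Gegenbauer polynomials or an induction/recursion on $k$ via the moments of $\langle v,u\rangle$ against $\sigma$. I would lean on Funk–Hecke to keep the bookkeeping minimal, reducing the whole lemma to one classical integral identity; the residual risk is matching normalization conventions for $C_k^{(\nu)}$ and for the surface measure, which must be done consistently to land on the stated fraction. Once $\lambda_k$ is in hand, the formula $\E\|B_Ug\|_2^2=\frac12(1+\lambda_k)\|g\|_2^2=\frac{d-2+k}{d-2+2k}\|g\|_2^2$ follows immediately.
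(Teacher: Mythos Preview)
Your reduction $\E\|B_Ug\|_2^2=\tfrac12(1+\lambda_k)\|g\|_2^2$ with $\lambda_k$ the eigenvalue of the averaging operator $T$ on $\mathcal S_k$ is correct, and the Schur--lemma argument that $T$ is a scalar on each $\mathcal S_k$ is sound. But there is an arithmetic slip: solving $\tfrac12(1+\lambda_k)=\dfrac{d-2+k}{d-2+2k}$ gives $\lambda_k=\dfrac{d-2}{d-2+2k}$, not $\dfrac{-k}{d-2+2k}$. (A quick check: in $d=2$ the paper shows $\E\langle h_A,h_{\pi_UA}\rangle=0$, i.e.\ $\lambda_k=0$, which your formula would give as $-1/2$.) More importantly, the actual determination of $\lambda_k$ --- which you yourself flag as ``the main obstacle'' --- is not carried out; you only indicate that Funk--Hecke or a test against a zonal harmonic would do it. That step is the entire content of the lemma once the soft reduction is made, so the proposal stops short of a proof.

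Your route is genuinely different from the paper's. The paper never computes the eigenvalue of $T$ directly. Instead it observes that $B_ug$ is the orthogonal projection of $g$ onto the subspace $\mathcal S_k^u=\{h\in\mathcal S_k:\,h\circ\pi_u=h\}$, then averages $\|\mathrm{Proj}_{\mathcal S_k^u}g\|_2^2$ over $u$ by passing to Haar measure on $O(d)$ and invoking an averaging identity (Lemma~2.2 of \cite{K}) which says each coordinate in a fixed orthonormal basis of $\mathcal S_k^{e_1}$ contributes $\|g\|_2^2/\dim\mathcal S_k$. This yields $\E\|B_Ug\|_2^2=(\dim\mathcal S_k^{e_1}/\dim\mathcal S_k)\|g\|_2^2$, and the ratio $(d-2+k)/(d-2+2k)$ falls out of the known dimension formulas. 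The trade-off: the paper's approach replaces a special-function computation (your Gegenbauer/Funk--Hecke integral) by combinatorial dimension counts, at the cost of importing the $O(d)$--averaging lemma from \cite{K}; your approach is more self-contained conceptually but leaves a nontrivial explicit integral to be evaluated.
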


The above identity is mentioned in \cite{K} but without proof. So the rest of this section is devoted to its proof.\\
For any $v\in\mathbb{R}^{d}$, $\mathcal{S}_{k}^{v}$ is defined as the set of elements $g\in\mathcal{S}_{k}$ symmetric with respect to the hyperplan $v^{\perp}$:
$$
\mathcal{S}_{k}^{v} = \{ g \in \mathcal{S}_{k} , \; g\circ\pi_{v} = g \} ~.
$$
Let us denote by $\mathrm{Proj}_{\mathcal{S}_{k}^{v}}$ the orthogonal projection onto $\mathcal{S}_{k}^{v}$. Then, the orthogonal projection of $g\in\mathcal{S}_{k}$ is actually equal to $B_{v}g$.

\begin{lem}
\label{lem:Projection}
For any $v\in\mathbb{R}^{d}$ and any $g\in\mathcal{S}_{k}$, $B_{v}g = \mathrm{Proj}_{\mathcal{S}_{k}^{v}}(g)$.
\end{lem}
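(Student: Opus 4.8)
The plan is to verify the two defining properties of an orthogonal projection: first that $B_vg$ lies in $\mathcal{S}_k^v$, and second that $g - B_vg$ is orthogonal to $\mathcal{S}_k^v$. We already observed in the discussion preceding the statement that $g\circ\pi_v\in\mathcal{S}_k$ whenever $g\in\mathcal{S}_k$ (composition with the linear isometry $\pi_v$ preserves both homogeneity and harmonicity), hence $B_vg=\tfrac12(g+g\circ\pi_v)\in\mathcal{S}_k$. To see $B_vg\in\mathcal{S}_k^v$, note that $\pi_v$ is an involution, so $(B_vg)\circ\pi_v=\tfrac12(g\circ\pi_v+g\circ\pi_v\circ\pi_v)=\tfrac12(g\circ\pi_v+g)=B_vg$; thus $B_vg$ is $\pi_v$-symmetric.

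Next I would check the orthogonality. Write $g-B_vg=\tfrac12(g-g\circ\pi_v)$. For any $h\in\mathcal{S}_k^v$, using that $\pi_v$ is a linear isometry of $\mathbb{R}^d$ preserving $\mathbb{S}^{d-1}$ and the Haar measure $\sigma$, we have $\langle g\circ\pi_v,h\rangle=\int_{\mathbb{S}^{d-1}}g(\pi_v x)h(x)\,d\sigma(x)=\int_{\mathbb{S}^{d-1}}g(y)h(\pi_v y)\,d\sigma(y)=\langle g,h\circ\pi_v\rangle=\langle g,h\rangle$, the last equality because $h\in\mathcal{S}_k^v$. Therefore $\langle g-B_vg,h\rangle=\tfrac12(\langle g,h\rangle-\langle g\circ\pi_v,h\rangle)=0$. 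Since $B_vg\in\mathcal{S}_k^v$ and $g-B_vg\perp\mathcal{S}_k^v$, the uniqueness of the orthogonal decomposition in the finite-dimensional space $\mathcal{S}_k$ forces $B_vg=\mathrm{Proj}_{\mathcal{S}_k^v}(g)$.

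I expect no serious obstacle here; the lemma is essentially the observation that ``averaging over a reflection'' is the projection onto the fixed subspace of that reflection, a standard fact about orthogonal projections associated to involutive isometries. The only points requiring a line of care are: (i) confirming $\mathcal{S}_k$ is invariant under $f\mapsto f\circ\pi_v$ so that all the objects genuinely live in $\mathcal{S}_k$ and the inner product is the one on $L_2(\mathbb{S}^{d-1})$ restricted to $\mathcal{S}_k$, and (ii) the change of variables $y=\pi_v x$ in the integral, which is justified because $\pi_v$ preserves $\sigma$. Both are routine. One should also note the argument works for arbitrary $v\in\mathbb{R}^d\setminus\{0\}$ since $\pi_v=\pi_{v/\|v\|}$, and is vacuous (or trivially true) for $v=0$; but the relevant case is $v\in\mathbb{S}^{d-1}$, so this is not an issue.
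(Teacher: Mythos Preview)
Your proof is correct and follows essentially the same route as the paper: both verify that $B_vg\in\mathcal{S}_k^v$ (the paper leaves this implicit) and then show $\langle g-B_vg,h\rangle=0$ for $h\in\mathcal{S}_k^v$ via the change of variables $y=\pi_v x$ together with $h\circ\pi_v=h$. The only difference is cosmetic---you change variables first and then use the symmetry of $h$, whereas the paper uses the symmetry of the test function first and then changes variables.
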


Let us consider two orthonormal bases $(e_{1},\ldots,e_{d})$ and $(v_{1},\ldots,v_{d})$ in $\mathbb{R}^{d}$, and the isometry $\psi$ mapping $e_{i}$ to $v_{i}$, for any $1\leq i\leq d$. Then:

\begin{lem}
\label{lem:Bases}
For any $g\in\mathcal{S}_{k}$, $\mathrm{Proj}_{\mathcal{S}_{k}^{v_{1}}}(g) = \mathrm{Proj}_{\mathcal{S}_{k}^{e_{1}}}(g\circ\psi)\circ\psi^{-1}$.\end{lem}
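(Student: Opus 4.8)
The plan is to realize composition with the isometry $\psi$ as a unitary operator on $L_2(\mathbb{S}^{d-1})$ that preserves $\mathcal{S}_k$ and carries $\mathcal{S}_k^{e_1}$ onto $\mathcal{S}_k^{v_1}$, and then to invoke the elementary rule $\mathrm{Proj}_{T(W)} = T\,\mathrm{Proj}_W\,T^{-1}$, valid for any unitary operator $T$ on a Hilbert space and any closed subspace $W$.

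First I would introduce $T_\psi f := f\circ\psi$ for $f\in L_2(\mathbb{S}^{d-1})$. Since $\psi$ is an orthogonal transformation of $\mathbb{R}^d$ and $\sigma$ is invariant under such transformations, $T_\psi$ is a linear isometry of $L_2(\mathbb{S}^{d-1})$, with inverse $T_{\psi^{-1}}$. Next I would check that $T_\psi$ stabilizes $\mathcal{S}_k$: if $P$ is a homogeneous harmonic polynomial of degree $k$, then $P\circ\psi$ is again homogeneous of degree $k$, and $\Delta(P\circ\psi) = (\Delta P)\circ\psi = 0$ because the Laplacian commutes with orthogonal changes of variables; restricting to $\mathbb{S}^{d-1}$ shows $T_\psi(\mathcal{S}_k)\subseteq\mathcal{S}_k$, and equality follows by the same argument applied to $\psi^{-1}$.

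The key geometric step is the conjugation identity for reflections: for any unit vector $u$ one has $\psi\circ\pi_u\circ\psi^{-1} = \pi_{\psi(u)}$, a one-line check from $\langle\psi^{-1}y,u\rangle = \langle y,\psi(u)\rangle$. With $u = e_1$ and $\psi(e_1)=v_1$ this reads $\psi\circ\pi_{e_1} = \pi_{v_1}\circ\psi$. Hence, if $g\in\mathcal{S}_k^{v_1}$, i.e. $g\circ\pi_{v_1}=g$, then $(g\circ\psi)\circ\pi_{e_1} = g\circ(\pi_{v_1}\circ\psi) = (g\circ\pi_{v_1})\circ\psi = g\circ\psi$, so $T_\psi(g)\in\mathcal{S}_k^{e_1}$; applying the same to $\psi^{-1}$ gives $T_{\psi^{-1}}(\mathcal{S}_k^{e_1})\subseteq\mathcal{S}_k^{v_1}$, so that $T_{\psi^{-1}}$ restricts to a bijection of $\mathcal{S}_k^{e_1}$ onto $\mathcal{S}_k^{v_1}$.

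Putting these together, I apply $\mathrm{Proj}_{T(W)} = T\,\mathrm{Proj}_W\,T^{-1}$ with $T = T_{\psi^{-1}}$ and $W = \mathcal{S}_k^{e_1}$ (so $T(W) = \mathcal{S}_k^{v_1}$). Evaluating at $g\in\mathcal{S}_k$ and using $T_\psi(g)=g\circ\psi\in\mathcal{S}_k$ gives
$$\mathrm{Proj}_{\mathcal{S}_k^{v_1}}(g) = T_{\psi^{-1}}\!\left(\mathrm{Proj}_{\mathcal{S}_k^{e_1}}(g\circ\psi)\right) = \mathrm{Proj}_{\mathcal{S}_k^{e_1}}(g\circ\psi)\circ\psi^{-1},$$
which is exactly the assertion. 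I do not expect a serious obstacle: the only delicate points are keeping track of $\psi$ versus $\psi^{-1}$ and verifying the (standard) conjugation rule for orthogonal projections, namely that $T\,\mathrm{Proj}_W\,T^{-1}$ is self-adjoint, idempotent, and has range $T(W)$. Alternatively, one can bypass the abstract rule and use Lemma \ref{lem:Projection} directly: $\mathrm{Proj}_{\mathcal{S}_k^{e_1}}(g\circ\psi) = B_{e_1}(g\circ\psi) = \tfrac12\big(g\circ\psi + g\circ\pi_{v_1}\circ\psi\big)$, and composing on the right with $\psi^{-1}$ yields $\tfrac12(g + g\circ\pi_{v_1}) = B_{v_1}g = \mathrm{Proj}_{\mathcal{S}_k^{v_1}}(g)$.
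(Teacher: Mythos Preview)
Your proposal is correct. The alternative you give at the end---using Lemma~\ref{lem:Projection} to write $\mathrm{Proj}_{\mathcal{S}_k^{e_1}}(g\circ\psi) = B_{e_1}(g\circ\psi)$ and then invoking the conjugation identity $\psi\circ\pi_{e_1}\circ\psi^{-1}=\pi_{v_1}$---is exactly the paper's proof; your main argument via the unitary operator $T_\psi$ and the rule $\mathrm{Proj}_{T(W)}=T\,\mathrm{Proj}_W\,T^{-1}$ is a slightly more abstract but equally valid packaging of the same idea.
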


Let $g\in\mathcal{S}_{k}$. By Lemmas \ref{lem:Projection} and \ref{lem:Bases},
\begin{eqnarray}
\label{ProjectionBases}
\| B_{v_{1}}g \|_{2}^{2} & = & \| \mathrm{Proj}_{\mathcal{S}_{k}^{v_{1}}}(g) \|_{2}^{2} \nonumber\\
& = & \| \mathrm{Proj}_{\mathcal{S}_{k}^{e_{1}}}(g\circ\psi) \|_{2}^{2} \nonumber\\
& = & \sum_{i=1}^{\ell(k)} \left( \int_{\mathbb{S}^{d-1}} g\circ\psi(x) S_{i}(x) d\sigma(x) \right)^{2}
\end{eqnarray}
where $\ell(k)$ and $(S_{1},\ldots,S_{\ell(k)})$ respectively denote the dimension and an orthonormal basis of $\mathcal{S}_{k}^{e_{1}}$.

Besides, assume an orthonormal basis $(v_{1},\ldots,v_{d})$ is chosen uniformly on the orthogonal group $\mathcal{O}(d)$. Then, its first vector $v_{1}$ is distributed uniformly on the sphere $\mathbb{S}^{d-1}$, i.e. according to $\sigma$. Precisely, let $\mu$ be the Haar probability measure on $\mathcal{O}(d)$. Let us denote by $\Psi$ the application from $\mathcal{O}(d)$ to $\mathbb{S}^{d-1}$ defined by $\Psi(\psi)=\psi(e_{1})$.

\begin{lem}
\label{lem:HaarMeasures}
The image measure $\mu\Psi^{-1}$ is equal to $\sigma$.
\end{lem}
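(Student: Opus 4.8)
The plan is to prove that the pushforward of Haar measure on $\mathcal{O}(d)$ under the evaluation map $\Psi(\psi) = \psi(e_1)$ is the Haar measure $\sigma$ on $\mathbb{S}^{d-1}$. The key point is a uniqueness argument: $\sigma$ is the \emph{unique} $\mathcal{O}(d)$-invariant Borel probability measure on $\mathbb{S}^{d-1}$, so it suffices to show $\mu\Psi^{-1}$ is $\mathcal{O}(d)$-invariant and is a probability measure. The latter is immediate since $\mu$ is a probability measure and $\Psi$ is (Borel) measurable, so $\mu\Psi^{-1}(\mathbb{S}^{d-1}) = \mu(\mathcal{O}(d)) = 1$.

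For the invariance, fix $\rho\in\mathcal{O}(d)$ and a Borel set $E\subset\mathbb{S}^{d-1}$. First I would observe the equivariance relation $\Psi(\rho\psi) = (\rho\psi)(e_1) = \rho(\psi(e_1)) = \rho(\Psi(\psi))$, i.e. $\Psi\circ L_\rho = \rho\circ\Psi$ where $L_\rho$ denotes left translation by $\rho$ on $\mathcal{O}(d)$. Hence $\Psi^{-1}(\rho^{-1}E) = L_\rho^{-1}(\Psi^{-1}(E))$, and therefore
\[
(\mu\Psi^{-1})(\rho^{-1}E) = \mu\bigl(L_\rho^{-1}(\Psi^{-1}(E))\bigr) = \mu\bigl(\Psi^{-1}(E)\bigr) = (\mu\Psi^{-1})(E),
\]
using left-invariance of the Haar measure $\mu$ on $\mathcal{O}(d)$ in the middle equality. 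This shows $\mu\Psi^{-1}$ is invariant under the action of $\mathcal{O}(d)$ on $\mathbb{S}^{d-1}$.

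It then remains to invoke uniqueness of the invariant probability measure on the sphere. Concretely, $\mathcal{O}(d)$ acts transitively on $\mathbb{S}^{d-1}$, so $\mathbb{S}^{d-1}$ is identified with the homogeneous space $\mathcal{O}(d)/\mathcal{O}(d-1)$ (the stabilizer of $e_1$), and a compact group acting transitively on a space carries a unique invariant probability measure on it; since $\sigma$ was defined as the Haar probability measure on $\mathbb{S}^{d-1}$, that unique measure is $\sigma$, whence $\mu\Psi^{-1} = \sigma$. The only mild obstacle is bookkeeping: one must check $\Psi$ is continuous (it visibly is, being evaluation of a linear map at a fixed point) so that $\Psi^{-1}$ of a Borel set is Borel and the pushforward is well-defined, and one should state the uniqueness of the invariant probability measure cleanly (or, alternatively, test against a dense class of functions using the invariance just established). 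None of this is deep, but it is the part where one should be careful to quote the right standard fact.
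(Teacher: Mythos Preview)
Your proof is correct and follows essentially the same approach as the paper: both establish the equivariance $\Psi\circ L_\rho=\rho\circ\Psi$, use left-invariance of $\mu$ to deduce that $\mu\Psi^{-1}$ is $\mathcal{O}(d)$-invariant, and then appeal to uniqueness of the invariant probability measure on $\mathbb{S}^{d-1}$. Your write-up is a bit more explicit about measurability and the homogeneous-space justification for uniqueness, but the argument is the same.
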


Assume $U$ is distributed according to $\sigma$. By Lemma \ref{lem:HaarMeasures},
$$
\E \| B_{U}g \|_{2}^{2} = \int_{\mathcal{O}(d)} \| B_{\Psi(\psi)}g  \|_{2}^{2} d\mu(\psi) ~.
$$
For any element $\psi$ of $\mathcal{O}(d)$, set $v_{1}=\psi(e_{1})$. Hence, we replace $\|B_{\Psi(\psi)}g \|_{2}^{2}$ with (\ref{ProjectionBases}):
$$
\E \| B_{U}g \|_{2}^{2} = \sum_{i=1}^{\ell(k)} \int_{\mathcal{O}(d)} \left( \int_{\mathbb{S}^{d-1}} g\circ\psi(x) S_{i}(x) d\sigma(x) \right)^{2} d\mu(\psi) ~.
$$
It suffices now to apply Lemma 2.2 of \cite{K} ensuring that each term of the above sum is equal to the ratio $\|g\|_{2}^{2}$ divided by the dimension of $\mathcal{S}_{k}$. So,
$$
\E \| B_{U}g \|_{2}^{2} = \frac{\ell(k)}{\dim\mathcal{S}_{k}} \|g\|_{2}^{2} ~.
$$
We achieve the proof of Proposition \ref{prop:contraction} with the following identities. The first one is well-known while the second one can be easily deduced from the proof of Lemma 3.1 of \cite{K}:
$$
\dim\mathcal{S}_{k} = \frac{d-2+2k}{d-2+k} { d+k-2 \choose d-2 } \; \mbox{ and } \; \ell(k) = \dim\mathcal{S}_{k}^{e_{1}} = { d+k-2 \choose d-2 } ~.
$$

This section ends with the proofs of Lemmas \ref{lem:Projection}, \ref{lem:Bases} and \ref{lem:HaarMeasures}.

\begin{proof}[Proof of Lemma \ref{lem:Projection}.]
Let $v\in\mathbb{R}^{d}$ and $g\in\mathcal{S}_{k}$. Using $\sigma\pi_{v}^{-1}=\sigma$ and $f\in\mathcal{S}_{k}^{v}$, we can write
\begin{eqnarray*}
\int_{\mathbb{S}^{d-1}} g(\pi_{v}x) f(x) d\sigma(x) & = & \int_{\mathbb{S}^{d-1}} g(\pi_{v}x) f(\pi_{v}x) d\sigma(x) \\
& = & \int_{\mathbb{S}^{d-1}} g(x) f(x) d\sigma(x)
\end{eqnarray*}
from which $\langle g-B_{v}g , f\rangle=0$ follows.
\end{proof}

\begin{proof}[Proof of Lemma \ref{lem:Bases}.]
Previous notations lead to the identity $\psi\circ\pi_{e_{1}}\circ\psi^{-1}=\pi_{v_{1}}$. Thus, Lemma \ref{lem:Projection} gives the searched result:
\begin{eqnarray*}
\mathrm{Proj}_{\mathcal{S}_{k}^{e_{1}}}(g\circ\psi)\circ\psi^{-1}(x) & = & \frac{1}{2}\left( g(x) + g(\psi\circ\pi_{e_{1}}\circ\psi^{-1}(x))\right) \\
& = & \frac{1}{2}\left( g(x) + g(\pi_{v_{1}}(x))\right) \\
& = & \mathrm{Proj}_{\mathcal{S}_{k}^{v_{1}}}(g) ~.
\end{eqnarray*}
\end{proof}

Lemma \ref{lem:HaarMeasures} is certainly known but we have not found it in the literature.

\begin{proof}[Proof of Lemma \ref{lem:HaarMeasures}.]
Let $U_{1}\in\mathcal{O}(d)$ and consider the endomorphism $\bar{U}_{1}$ of the orthogonal group $\mathcal{O}(d)$ defined by $\bar{U}_{1}(V)=U_{1} V$. It is then easy to see that $U_{1}\circ\Psi=\Psi\circ\bar{U}_{1}$. Since the Haar probability measure $\mu$ is invariant under $\bar{U}_{1}$, it follows the image measure $\mu\Psi^{-1}$ is invariant under $U_{1}$. This holds for any $U_{1}\in\mathcal{O}(d)$: only the Haar probability measure $\sigma$ can do it.
\end{proof}

\section{Random Steiner symmetrizations}
\label{sect:RandomSteiner}

Let $A$ be a convex body in $\mathbb{R}^{d}$ having the same volume as the unit ball $D$. The main result of this section gives a rate of convergence for
$$
S_{n}A = S_{U_{n}}(\ldots S_{U_{2}}(S_{U_{1}} A)\ldots)
$$
to $D$ when the random directions $U_{k}\in\mathbb{S}^{d-1}$, $k\geq 1$, are independent. Recall that $\sigma$ denotes the Haar probability measure on $\mathbb{S}^{d-1}$.

\begin{thm}
\label{thm:Steiner}
Assume that, for any $k\geq 1$, the distribution $\nu_{k}$ of $U_k$ is absolutely continuous with respect to $\sigma$ and its density satisfies
\begin{equation}
\label{hypodistrib2}
\frac{d\nu_{k}}{d\sigma}(u) \leq \alpha < \frac{d}{d-1}
\end{equation}
for some $\alpha>0$ and $\sigma-$a.e. $u\in\mathbb{S}^{d-1}$. Then, there exists two positive constants $c$ and $c'$ which only depend on $d$, $A$ and $\alpha$ such that, with probability $1$,
\begin{equation}
\label{rateMinkowski}
\exists n_{0}(\omega) , \; \forall n \geq n_{0} , \; d_{H}\left( S_{n}A , D \right) \leq c e^{-c'\sqrt{n}} ~.
\end{equation}
Furthermore, the first random integer $n_{0}$ from which the above inequality holds satisfies:
\begin{equation}
\label{rateSteiner}
\P ( n_{0} > m ) \leq  c e^{-c'\sqrt{m}} ~.
\end{equation}
\end{thm}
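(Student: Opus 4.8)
The plan is to deduce Theorem \ref{thm:Steiner} from Theorem \ref{thm:Minkowski} using the inclusion $S_u A \subset B_u A$ from Lemma \ref{lem:BasicProp} $(iii)$, following Klartag's strategy. The idea is to break the $n$ symmetrizations into $\sqrt{n}$ consecutive blocks, each of length $\sqrt{n}$. Within each block, Steiner symmetrals are sandwiched by Minkowski symmetrals, which contract at exponential rate by Theorem \ref{thm:Minkowski}; so after one block of length $\approx\sqrt{n}$ we gain a factor $e^{-c\sqrt{n}}$, and we then need a mechanism to "reset" the volume-normalization so that the next block starts afresh. The point is that Steiner symmetrization preserves volume (\ref{VolumeInvariance}) and the outer ball $R(\cdot)D$ can only shrink (Lemma \ref{lem:BasicProp} $(ii)$), so after each block the body is trapped between $(1-\delta_j)D$ and $(1+\delta_j')D$ with $\delta_j,\delta_j'$ controlled, and one iterates. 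Composing $\sqrt n$ such blocks yields the $ce^{-c'\sqrt n}$ bound.

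More precisely, first I would record the elementary fact that for a body $K$ with $\mathrm{vol}(K)=v_d$ and $K\subset (1+\delta)D$, the Minkowski symmetrals $B_{k,m}K$ converge to $L(K)D$ with $L(K)$ close to $1$ (since $\mathrm{vol}(K)=v_d$ forces $L(K)\geq 1$, while $K\subset(1+\delta)D$ gives $L(K)\leq 1+\delta$), and by Theorem \ref{thm:Minkowski} applied to $K$ this convergence is a.s. at exponential rate with the first good integer having exponential moments — and crucially the constants depend only on $d,\alpha$ and on $R(K)\le 1+\delta$, hence are uniform over the relevant family of bodies. Then, since $S_{k,m}K\subset B_{k,m}K$ and $\mathrm{vol}(S_{k,m}K)=v_d$, the body $S_{k,m}K$ is squeezed between a ball of volume $v_d$ and the ball $B_{k,m}K\subset (1+\eta)D$; thus $S_{k,m}K\subset(1+\eta)D$ and, by a volume argument, $S_{k,m}K\supset(1-\eta')D$ with $\eta'$ a function of $\eta$ vanishing at $0$. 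This is the one-block estimate: starting from $(1+\delta)D$-pinched, after a block of length $t$ we are $(1+\eta(t))D$-pinched with $\eta(t)$ of order $e^{-ct}$ (up to the random fluctuation).

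Next I would set up the block decomposition: partition $\{1,2,\dots\}$ into consecutive blocks $I_j$ of length $\lceil j\rceil$ or so (so that the $j$-th block ends near index $n\sim j^2/2$), let $A_j := S_{\max I_j}A$ be the body after the first $j$ blocks, and show by induction that $A_j$ is pinched between $(1-\varepsilon_j)D$ and $(1+\varepsilon_j)D$ with $\varepsilon_j\le C\sum_{i\le j} e^{-c'|I_i|}\cdot(\text{random factors})$, which telescopes to something like $\varepsilon_j \le c\,e^{-c'|I_j|}$. The random integers $n_0^{(j)}$ from Theorem \ref{thm:Minkowski} applied in block $j$ each have exponential moments uniformly in $j$; so $\mathbb{P}(n_0^{(j)}>|I_j|)\le c e^{-c'|I_j|}$, and by Borel-Cantelli a.s. only finitely many blocks are "bad," after which $\varepsilon_j$ decays like $e^{-c'|I_j|}\approx e^{-c''\sqrt{\max I_j}}$. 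Finally I would translate from the block endpoints to a general $n$ (monotonicity of the pinching in $n$ within a block, from Lemma \ref{lem:BasicProp} $(ii)$ and the volume invariance), giving $d_H(S_nA,D)\le ce^{-c'\sqrt n}$ for $n\ge n_0$, and use $\mathbb{P}(n_0>m)\le\sum_{j:\,\max I_j\le m}\mathbb{P}(n_0^{(j)}>|I_j|)\le ce^{-c'\sqrt m}$ for the tail bound (\ref{rateSteiner}).

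The main obstacle I expect is making the "reset" between blocks genuinely rigorous with \emph{uniform} constants: one must ensure that the constant $c$ and the exponential-moment bound in Theorem \ref{thm:Minkowski}, when applied to the (random) body at the start of each block, do not degrade from block to block — this works because all these bodies sit inside a fixed ball (say $2D$, once $\varepsilon_j<1$) and have a fixed volume $v_d$, so $\|h_{\cdot}\|_2$ and $R(\cdot)$ are bounded uniformly, but one must check this carefully. A secondary technical point is converting the Hausdorff/Minkowski pinching $B_{k,m}K\subset(1+\eta)D$ into an inner bound $S_{k,m}K\supset(1-\eta')D$ via volume, and controlling $\eta'$ in terms of $\eta$; this is a routine isoperimetric-type computation but needs to be done with explicit dependence. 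One also needs the elementary observation $S_u(rA)=rS_uA$ (already used in the proof of Proposition \ref{prop:dense}) to reduce the general convex body $A$ to the volume-$v_d$ normalization, and the initial estimate that $A$ itself, after possibly a first block, is pinched inside some $(1+\delta)D$ — which follows since Steiner symmetrization decreases the circumradius and, after finitely many well-chosen symmetrizations (or just from the a.s. rounding already known, e.g. Proposition \ref{prop:BCuniversal}, or directly from Theorem \ref{thm:Minkowski} on the first block), the circumradius is brought below $1+\delta$.
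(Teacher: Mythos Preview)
Your block decomposition and the Minkowski-to-Steiner sandwich are exactly the right skeleton, and they match the paper's architecture. But there is a genuine gap in your one-block estimate, and without closing it the recursion does not contract.

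You write: ``starting from $(1+\delta)D$-pinched, after a block of length $t$ we are $(1+\eta(t))D$-pinched with $\eta(t)$ of order $e^{-ct}$.'' This is not what Theorem~\ref{thm:Minkowski} gives you. The Minkowski symmetrals $B_{k,m}K$ converge to $L(K)D$, so after a block of length $t$ you obtain
\[
S_{k,m}K \subset B_{k,m}K \subset \bigl(L(K)+O(e^{-ct})\bigr)D,
\]
and hence $\eta(t)\approx (L(K)-1)+e^{-ct}$. You correctly observe $1\le L(K)\le 1+\delta$, but that only yields $\eta(t)\le \delta+e^{-ct}$: the pinching does \emph{not} improve, and your telescoping claim $\varepsilon_j\le c\,e^{-c'|I_j|}$ collapses to $\varepsilon_{j+1}\le\varepsilon_j+\text{error}_j$.

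The missing ingredient is a quantitative statement that $L(K)-1$ is \emph{strictly} smaller than $\delta$ by a definite factor. The paper imports this as Lemma~\ref{lem:GainMstar} (Klartag's Corollary~6.2, itself a consequence of the Bokowski--Heil inequality on quermassintegrals): if $\mathrm{vol}(K)=v_d$ and $K\subset(1+\delta)D$, then $L(K)-1\le r_d\,\delta$ with $r_d=1-1/d^2<1$. With this, each block multiplies the pinching by $r_d$ (plus the exponentially small Minkowski error), and the recursion closes exactly as you intend. This lemma is not an ``elementary fact'' or a ``routine isoperimetric-type computation''; it is the substantive geometric input that makes the $\sqrt{n}$ rate work.

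A secondary issue: your inner bound $(1-\eta')D\subset S_{k,m}K$ ``by a volume argument'' is weaker than you suggest. A body of volume $v_d$ inside $(1+\eta)D$ can have arbitrarily small inradius. The paper sidesteps this by working in the Nikod\'ym distance, using $d_N(S_{2n}A,D)=2\lambda^d(S_{2n}A\setminus D)$ and bounding only the outer excess via the Minkowski sandwich; the inner control comes for free from equality of volumes, and the conversion to Hausdorff is done at the very end via Lemma~\ref{lem:Hausdorff<Nyk}. You should either adopt that route or supply a separate argument for the inradius.
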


\begin{rem}
The comparison between Theorem \ref{thm:Steiner} and Klartag's result (Theorem 1.5 of \cite{K} says an implicit sequence of $n$ Steiner symmetrizations transforms $A$ into a new convex body at distance from $D$ smaller than $e^{-\delta\sqrt{n}}$) is the same as the one between Theorem \ref{thm:Minkowski} and Theorem 1.3 of \cite{K}. See the first paragraph just after Theorem \ref{thm:Minkowski} in Section \ref{sect:RateMinkowski}.
\end{rem}

\begin{rem}
The almost sure convergence (but without rate of convergence) of $S_{n}A$ to $D$ in the case $\nu_{k}=\sigma$ has been first proved by Mani-Levitska \cite{ML}. Vol\v{c}i\v{c} \cite{V} has recently extended this result to any probability measure assigning positive mass to any open subset of $\mathbb{S}^{d-1}$. Theorem \ref{thm:Steiner} improves Vol\v{c}i\v{c}'s result in two directions. First, Theorem \ref{thm:Steiner} does not require that the random directions are identically distributed. Secondly, the positivity hypothesis is relaxed here, since (\ref{hypodistrib2}) allows the $\nu_{k}$ to avoid some open subsets of $\mathbb{S}^{d-1}$. In the same way, (\ref{hypodistrib2}) completes the condition (\ref{nonstat}) of Bouchard and Fortier \cite{BF}.
\end{rem}

\begin{rem}
\label{Rmk:indephypo}
Let us note that the independence hypothesis between random directions can be slightly weakened. Indeed, Theorem \ref{thm:Steiner} still holds when the sequence $(U_{n})_{n\geq 1}$ is a time-homogeneous Markov chain on $\mathbb{S}^{d-1}$ whose transition probability kernel $\mathrm{P}$ is such that, for any $v\in\mathbb{S}^{d-1}$, the probability measure $\mathrm{P}(v,\cdot)$ satisfies the condition (\ref{hypodistrib2}). The same is true for Theorem \ref{thm:Minkowski}. See \cite{MT} for a general reference on Markov chains with continuous state space.
\end{rem}

\begin{rem}
The identity $S_{u}(rA)=r S_{u}A$, for $r>0$, allow to extend Theorem \ref{thm:Steiner} to convex bodies with any positive volume. When the volume of $A$ is null, $A$ lies in a proper subspace of $\mathbb{R}^{d}$. In this case, the Steiner symmetrization $S_{u}$ and the orthogonal projection onto $u^{\perp}$ coincide. Then, it is not difficult to prove that the rate of convergence of $S_{n}A$ to the origin is exponential.
\end{rem}

\begin{rem}
To obtain its result (Theorem 3.4 of \cite{V}), Vol\v{c}i\v{c} proved the \textit{moment of inertia} of $S_{n}A$, i.e.
$$
I(S_{n}A) = \int_{S_{n}A} \|z\|^{2} \, d\lambda_{d}(z) ~,
$$
converges to the moment of inertia of $D$ (where $\|\cdot\|$ denotes the euclidean norm). Inequality (\ref{RateSteinerNyk}) below specifies the rate of convergence;
$$
| I(S_{n}A) - I(D) | \leq R(A)^{2} d_{N}(S_{n}A , D) \leq R(A)^{2} e^{-c_{7}\sqrt{n}} \; \mbox{a.s.}
$$
\end{rem}

As it has been recalled in Section \ref{sect:Definitions}, the sequence $(L(S_{n}A))_{n}$ is nonincreasing. Hence, the sequence of corresponding expectations converges. Proposition \ref{prop:VitesseMstar} specifies its limit and its rate of convergence.

\begin{prop}
\label{prop:VitesseMstar}
There exists two positive constants $c_{1}$ and $c_{2}$ which only depend on $d$, $A$ and $\alpha$, such that for any $n$,
\begin{equation}
\label{VitesseMstar}
0 \leq \E L(S_{n}A) - 1 \leq c_{1} e^{-c_{2}\sqrt{n}} ~.
\end{equation}
\end{prop}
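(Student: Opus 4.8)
The plan is to establish the two inequalities separately, the left one being elementary and the right one relying on a block argument built on the Minkowski contraction of Proposition~\ref{prop:contraction}. For the lower bound: since Steiner symmetrization preserves the volume (see (\ref{VolumeInvariance})), $vol(S_nA)=vol(A)=v_d$ for every $n$, so the ball with the same volume as $S_nA$ is $D$ itself, i.e. $r(S_nA)=1$. Urysohn's inequality — the ball maximizes the volume among convex bodies of prescribed mean width, equivalently $vol(K)\leq v_d\,L(K)^d$; see \cite{G} — then gives $L(S_nA)\geq r(S_nA)=1$ pathwise, and taking expectations yields $\E L(S_nA)\geq 1$.

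\textbf{Reduction to a block contraction.} Since $(\E L(S_nA))_n$ is nonincreasing by (\ref{MeanRadVol}), it suffices to control it along a well chosen subsequence. I would run the symmetrizations in consecutive blocks $I_1,I_2,\dots$, the block $I_k$ consisting of $m_k$ directions, and prove a block estimate: if $K$ is a convex body with $vol(K)=v_d$ and $L(K)=1+\delta$, then after $m$ Steiner symmetrizations in directions satisfying (\ref{hypodistrib2}),
$$
\E\, L(S_mK)-1\ \leq\ \tfrac12\,\delta\qquad\text{as soon as}\qquad m\ \geq\ C_0\log\tfrac1\delta ,
$$
with $C_0=C_0(d,R(A),\alpha)$ (for $\delta$ bounded away from $0$, a bounded number of steps suffices, handled along the same lines). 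Granting this, choose $m_k=\lceil C_0\log(2^k/\delta_0)\rceil$ with $\delta_0:=L(A)-1$, so that $m_k\asymp k$ and $N_k:=m_1+\dots+m_k\asymp k^2$; iterating the block estimate over $I_1,\dots,I_k$ gives $\E L(S_{N_k}A)-1\leq 2^{-k}\delta_0\leq c_1 e^{-c_2\sqrt{N_k}}$, and monotonicity of $\E L(S_nA)$ fills in the intermediate $n$. It is exactly the quadratic growth $N_k\asymp k^2$ of the number of symmetrizations required to halve the deficit $k$ times — itself a consequence of the fact that halving the deficit costs $\asymp\log(1/\delta)$ steps — that turns the exponential rate of Theorem~\ref{thm:Minkowski} into the rate $e^{-c\sqrt n}$ obtained here.

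\textbf{The block estimate, and the main obstacle.} To prove the block estimate I would couple the Steiner chain with the Minkowski chain in the same directions. By Lemma~\ref{lem:BasicProp}$(iii)$, $S_mK\subset B_mK$; as in the proof of Theorem~\ref{thm:Minkowski}, Proposition~\ref{prop:contraction} together with (\ref{hypodistrib2}) gives $\E\|h_{B_mK}\|_2^2\leq\alpha_d^{\,m}\|h_K\|_2^2$ with $\alpha_d=\alpha(d-1)/d<1$, and Lemma~\ref{lem:InfiniNorme} upgrades this to $\E\|h_{B_mK}\|_\infty\leq(z_d\|h_K\|_2\,\alpha_d^{m/2})^{1/d}$, so that $B_mK$ lies within Hausdorff distance $\varepsilon_m$, of order $e^{-cm}$ in expectation, of the ball $(1+\delta)D$. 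The inclusion $S_mK\subset B_mK\subset(1+\delta+\varepsilon_m)D$ only reproduces the monotonicity bound $L(S_mK)\leq 1+\delta$; the genuine gain has to be squeezed out of the frozen volume $vol(S_mK)=v_d$, together with the fact that $S_mK$ — being a Steiner iterate rather than an arbitrary convex body trapped in $(1+\delta)D$ — cannot realize the extremal ``cap-like'' configuration that would keep its mean radius near $1+\delta$. Making this precise is the hard part: it amounts to a quantitative stability statement to the effect that a convex body of volume $v_d$ sitting inside a ball of radius $1+\eta$ and produced by sufficiently many Steiner symmetrizations has mean radius at most $1+\eta/2$, and it requires tracking how the available contraction degrades as the body rounds off (this degradation being precisely why $\asymp\log(1/\delta)$ steps, rather than $O(1)$, are needed). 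Once the block estimate is in hand, the rest is the bookkeeping of block lengths described above, together with the observation — used exactly as in the proof of Theorem~\ref{thm:Minkowski} — that expectations of powers of $\|h_{B_mK}\|_\infty$ are controlled, via Jensen's inequality, by the $L_2^2$ bound.
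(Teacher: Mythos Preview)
Your block architecture and the bookkeeping of block lengths $m_k\asymp k$, $N_k\asymp k^2$ match the paper's proof almost exactly, and your lower bound via Urysohn's inequality is correct (the paper argues it slightly differently, by contradiction via the Minkowski limit, but your route is cleaner). The substantive gap is in the paragraph you label ``the main obstacle'': you have correctly isolated the key step but misidentified what is needed to close it.

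Concretely, you write that the inclusion $S_mK\subset B_mK\subset(1+\delta+\varepsilon_m)D$ ``only reproduces the monotonicity bound'' and that the genuine gain must come from some special property of Steiner iterates as opposed to arbitrary convex bodies in a small ball. This is not so. The missing ingredient is the inequality of Bokowski--Heil (used by Klartag as his Corollary~6.2, and recorded in the paper as Lemma~\ref{lem:GainMstar}): for \emph{any} convex body $K$ with $vol(K)=v_d$ and $K\subset(1+\varepsilon)D$ one has
\[
L(K)-1\ \leq\ r_d\,\varepsilon,\qquad r_d=1-\tfrac{1}{d^2}<1.
\]
Applied to $S_mK\subset(1+\delta+\varepsilon_m)D$ this gives $L(S_mK)-1\leq r_d(\delta+\varepsilon_m)$, which \emph{is} a genuine contraction once $\varepsilon_m$ is made small compared to $\delta$ --- say $\varepsilon_m\leq\frac{1-r_d}{2r_d}\delta$, yielding $L(S_mK)-1\leq\frac{1+r_d}{2}\,\delta$. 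No special feature of Steiner iterates is invoked; the quermassintegral inequality does the work for arbitrary convex bodies of the right volume. This also clarifies why $m\asymp\log(1/\delta)$ steps are needed in each block: it is not that the contraction ``degrades as the body rounds off'' (the factor $r_d$ is fixed), but rather that the Minkowski error $\varepsilon_m\sim e^{-cm}$ must be driven below a fixed fraction of the current deficit $\delta$, and since $\delta$ halves at each block, the required $m$ grows linearly with the block index. With Lemma~\ref{lem:GainMstar} in hand, your block estimate and the rest of your outline go through exactly as in the paper.
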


Let us start with proving Theorem \ref{thm:Steiner} from the above result.

\begin{proof}[Proof of Theorem \ref{thm:Steiner}.]
Recall $d_{N}$ denotes the Nikod\'ym distance. Since $S_{2n}A$ and the unit ball $D$ have the same volume, we can write:
\begin{eqnarray}
\label{DistNikS2nA}
\frac{1}{2} d_{N}(S_{2n}A , D) & = & \lambda_{d}(S_{2n}A\setminus D) \nonumber\\
& \leq & \lambda_{d}(B_{2n,n+1}(S_{n}A)\setminus D) \nonumber\\
& \leq & d_{N}(B_{2n,n+1}(S_{n}A) , D) \nonumber\\
& \leq & d_{N}(B_{2n,n+1}(S_{n}A) , L(S_{n}A)) + d_{N}(L(S_{n}A) , D) ~.
\end{eqnarray}
Now, let us bound the two terms of the sum (\ref{DistNikS2nA}). If $X_{n}$ denotes the Hausdorff distance between $B_{2n,n+1}(S_{n}A)$ and $L(S_{n}A)$, then $B_{2n,n+1}(S_{n}A)$ contains the centered ball with radius $L(S_{n}A)-X_{n}$ and is contained in the one with radius $L(S_{n}A)+X_{n}$. Hence, the first term of (\ref{DistNikS2nA}) is smaller than
\begin{equation}
\label{XnMstarSnA}
\kappa_{d} \left( (L(S_{n}A) + X_{n})^{d} - (L(S_{n}A) - X_{n})^{d} \right) ~.
\end{equation}
Inequalities $L(S_{n}A)\leq L(A)$ and $X_{n}\leq R(A)+L(A)$ allow to bound (\ref{XnMstarSnA}) by $c_{3} X_{n}$, for a suitable constant $c_{3}=c_{3}(d,A)>0$. The second term of (\ref{DistNikS2nA}) is treated in the same way:
\begin{eqnarray*}
d_{N}(L(S_{n}A) , D) & = & \lambda_{d}(L(S_{n}A) D \setminus D) \\
& = & \kappa_{d} \left( L(S_{n}A)^{d} - 1 \right) \\
& \leq & c_{4} \left( L(S_{n}A) - 1 \right) ~,
\end{eqnarray*}
for a suitable constant $c_{4}=c_{4}(d,A)>0$. Combining the previous inequalities with Proposition \ref{prop:VitesseMstar} and (\ref{EspNormeInfini2})-- from the proof of Proposition \ref{prop:VitesseMstar} --we get:
$$
\E d_{N}(S_{2n}A , D) \leq 2c_{3} a_{1} a_{2}^{n} + 2c_{4} c_{1} e^{-c_{2}\sqrt{n}}
$$
($a_{1}$ and $a_{2}$ are two positive constants depending on $d$, $A$ and $\alpha$, and $a_{2}<1$). The same upperbound holds for the expectation of $d_{N}(S_{2n+1}A , D)$ since the Steiner symmetrization is a $1-$Lipschitz function with respect to the Nikod\'ym distance (see Lemma \ref{lem:Nykodim1Lip}). To sum up, there exist $c_{5},c_{6}>0$ such that, for any $n$,
$$
\E d_{N}(S_{n}A , D) \leq c_{5} e^{-c_{6}\sqrt{n}} ~.
$$
By Markov's inequality and the Borel-Cantelli lemma, we deduce there exists $0<c_{7}<c_{6}$ such that, with probability $1$, for $n$ large enough,\begin{equation}
\label{RateSteinerNyk}
d_{N}(S_{n}A , D) \leq e^{-c_{7}\sqrt{n}} ~.
\end{equation}
Finally, the passage from the Nikod\'ym distance to the Hausdorff one is ensured by Lemma \ref{lem:Hausdorff<Nyk}. With $r=(2R(A))^{-1}$, the quantity $d_{H}(S_{n}(rA),rD)$ is smaller than $1/2$ for any integer $n$. So, Lemma \ref{lem:Hausdorff<Nyk} applies: with probability $1$,
\begin{eqnarray*}
d_{H}(S_{n}A , D) & = & r^{-1} d_{H}(S_{n}(rA) , rD) \\
& \leq & C r^{-1} d_{N}(S_{n}(rA) , rD)^{\frac{2}{d+1}} \\
& \leq & C r^{\frac{2}{d+1}-1} e^{-\frac{2c_{7}}{d+1}\sqrt{n}} ~.
\end{eqnarray*}
Statement (\ref{rateMinkowski}) follows. To get (\ref{rateSteiner}), we proceed as in the proof of Theorem \ref{thm:Minkowski}.
\end{proof}

\begin{proof}[Proof of Proposition \ref{prop:VitesseMstar}.]
Assume there exists $n$ such that $\beta:=L(S_{n}A)<1$. By Theorem \ref{thm:Minkowski}, conditionally to $S_{n}A$,
$$
B_{m,n+1}(S_{n}A) = B_{U_{m}}(\ldots B_{U_{n+1}}(S_{n} A)\ldots)
$$
converges almost surely to $\beta D$ as $m$ tends to $\infty$. Combining with the fact that the Minkowski symmetrization of a given set increases its volume (remember (\ref{MeanRadVol})), it follows
$$
vol(D) > \beta^{d} vol(D) \geq vol \left( B_{m,n+1}(S_{n}A) \right) \geq vol \left( S_{n}A \right) = vol(A) ~.
$$
This contradicts the hypothesis $vol(A)=vol(D)$ and states the lower bound of (\ref{VitesseMstar}).

The proof of the upper bound of (\ref{VitesseMstar}) requires more work. It is based on two ingredients; first, on the next lemma (Corollary 6.2 of \cite{K}) which is a particular case of a result on quermassintegrals due to Bokowski and Heil (Theorem 2 of \cite{BH}).

\begin{lem}
\label{lem:GainMstar}
Let $\varepsilon>0$ and $K\subset(1+\varepsilon)D$ be a convex body having the same volume as $D$. Then,
$$
L(K) - 1 \leq r_{d} \, \varepsilon
$$
where $r_{d}=1-\frac{1}{d^{2}}<1$.
\end{lem}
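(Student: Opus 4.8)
Lemma \ref{lem:GainMstar} asserts: if $\varepsilon>0$ and $K\subset(1+\varepsilon)D$ is a convex body with $\mathrm{vol}(K)=\mathrm{vol}(D)$, then $L(K)-1\le r_d\,\varepsilon$ where $r_d=1-1/d^2$.

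The plan is to derive the bound from a Bonnesen-type inequality for quermassintegrals. First I would normalize. Both the mean radius $L(\cdot)$ and the volume are translation invariant (for $L$ because $\int_{\mathbb{S}^{d-1}}\langle v,\theta\rangle\,d\sigma(\theta)=0$ for every $v$), so I may translate $K$ so that the centre of its minimal enclosing ball lies at the origin; then $K\subset RD$, where $R\le 1+\varepsilon$ is the circumradius of $K$, and moreover $R\ge 1$ since $K\subset RD$ and $vol(K)=vol(D)=\kappa_d$. It is thus enough to prove the stronger inequality $L(K)\le 1+r_d(R-1)$ and then use $R-1\le\varepsilon$.

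Next I would pass to quermassintegrals. With the normalization $W_j(\rho D)=\kappa_d\rho^{d-j}$ one has $vol(K)=W_0(K)$, and, since the mean width of $K$ equals $2L(K)$, also $L(K)=W_{d-1}(K)/\kappa_d$. Writing $r(K)=(vol(K)/\kappa_d)^{1/d}$ for the volume-radius (so that Urysohn's inequality gives $r(K)\le L(K)\le R$), Theorem 2 of Bokowski and Heil \cite{BH} --- equivalently Corollary 6.2 of Klartag \cite{K} --- yields a Bonnesen-type interpolation between the two extreme radii:
$$
L(K)=\frac{W_{d-1}(K)}{\kappa_d}\;\le\;(1-r_d)\,r(K)+r_d\,R ,\qquad r_d=1-\tfrac{1}{d^2},
$$
with equality for balls. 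Substituting $vol(K)=vol(D)$, i.e. $r(K)=1$, and $R\le 1+\varepsilon$ gives $L(K)\le(1-r_d)+r_d(1+\varepsilon)=1+r_d\varepsilon$, which is the desired estimate.

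The only genuine difficulty is the quermassintegral inequality displayed above. It is not elementary: $L(K)$ is a global functional of the support function $f_K$, whereas the constraint $vol(K)=\kappa_d$ is a constraint on the surface area measure of $K$, and relating the two requires the integral-geometric machinery of mixed volumes (an Alexandrov--Fenchel / Bonnesen-type argument comparing $K$ with its circumscribed ball) rather than a direct computation; this is exactly the content of \cite[Theorem 2]{BH}, so I would simply invoke it. Everything else --- the translation normalization and the final substitution --- is routine. Finally one checks the trivial extreme cases: $R=1$ forces $K=D$ (hence $L(K)=1$), and the degenerate case $vol(K)=0$ does not occur since $K$ is a convex body.
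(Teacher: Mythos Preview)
Your proposal is correct and matches the paper's approach: the paper does not prove this lemma at all but simply states it as Corollary~6.2 of Klartag~\cite{K}, itself a special case of Theorem~2 of Bokowski and Heil~\cite{BH}, which is exactly the quermassintegral inequality you invoke. Your additional normalization and substitution steps just make explicit what the paper leaves to the citation; the translation to centre the circumscribed ball is harmless but unnecessary, since the inequality $L(K)\le(1-r_d)r(K)+r_d R$ holds for any $R$ with $K\subset RD$ (it is monotone in $R$), so one may take $R=1+\varepsilon$ directly.
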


Secondly, we need to check the expectation of $\|h_{B_{n+m,n+1}(S_{n}A)}\|_{\infty}$. Since $R(S_{n}A)$ is smaller than $R(A)$, Lemma \ref{lem:InfiniNorme} applies to $S_{n}A$ instead of $A$ but with the same constant as in (\ref{InfiniNorme}), denoted by $z_{d}$. Thus, an analogue of inequality (\ref{EspNormeInfini}) is obtained: for any integers $m,n$,
\begin{equation}
\label{EspNormeInfini2}
\E \| h_{B_{n+m,n+1}(S_{n}A)} \|_{\infty} \leq a_{1} a_{2}^{m} ~,
\end{equation}
where $a_{1}=(z_{d}R(A))^{1/d}$ and $a_{2}=\left(\frac{\alpha(d-1)}{d}\right)^{1/2d}$. This latter is strictly smaller than $1$ thanks to hypothesis (\ref{hypodistrib2}).\\
Some additional constants have to be introduced. We set
$$
\gamma = \frac{r_{d}+1}{2} \, < \, 1 \, , \; b = \frac{\log \gamma}{\log a_{2}} \, > \, 0
$$
and $m\in\mathbb{N}$ such that
$$
a_{1} a_{2}^{m} \leq \frac{1-r_{d}}{2 r_{d}} \left( M^{\star}\!(A) - 1 \right) ~.
$$
Thus, by induction, we define a sequence of integers $(m_{k})_{k\geq 0}$ by
$$
m_{0} = m \; \mbox{ and } \; \forall k \in \mathbb{N} , \; m_{k+1} = \lfloor m_{k}+b \rfloor + 1
$$
(where $\lfloor x\rfloor$ denotes the integer part of $x$) and a sequence of convex bodies $(A_{k})_{k\geq 0}$ by $A_{0}=A$, $A_{1}=S_{m_{0}}A$ and for any $k\geq 1$,
$$
A_{k+1} = S_{\mathbf{m_{k}},\mathbf{m_{k-1}}+1} A_{k} \; \mbox{ where } \; \mathbf{m_{k}} = \sum_{i=0}^{k} m_{i} ~.
$$
Roughly speaking, the passage from $A_{k}$ to $A_{k+1}$ is obtained after $m_{k}$ Steiner symmetrizations. This process actually reduces the mean radius $L$. Precisely, we are going to prove that, for any $k\in\mathbb{N}$,
\begin{equation}
\label{RecMstar}
\E L(A_{k}) - 1 \leq \gamma^{k} \left( L(A) - 1 \right) ~.
\end{equation}
The case $k=0$ is obvious. Assume (\ref{RecMstar}) holds for a given $k\in\mathbb{N}$. Let us denote by $X_{k}$ the Hausdorff distance between $B_{\mathbf{m_{k}},\mathbf{m_{k-1}}+1} A_{k}$ and $L(A_{k}) D$. Thanks to (\ref{EspNormeInfini2}), the expectation of $X_{k}$ is upperbounded by $a_{1} a_{2}^{m_{k}}$. Besides, $A_{k+1}$ is included in $B_{\mathbf{m_{k}},\mathbf{m_{k-1}}+1} A_{k}$, itself included in $(X_{k}+L(A_{k})) D$. So, we can apply Lemma \ref{lem:GainMstar} to $A_{k+1}$ whose volume equals the one of $D$:
$$
L(A_{k+1}) - 1 \leq r_{d} \left( L(A_{k}) - 1 + X_{k} \right) ~.
$$
The induction hypothesis then gives
$$
\E L(A_{k+1}) - 1 \leq r_{d} \left( \gamma^{k} \left( L(A) - 1 \right) + a_{1} a_{2}^{m_{k}} \right) ~.
$$
Now, the sequence $(m_{k})_{k\geq 0}$ has been built so that
$$
a_{1} a_{2}^{m_{k}} \leq \gamma a_{1} a_{2}^{m_{k-1}} \leq \ldots \leq \gamma^{k} a_{1} a_{2}^{m} \leq \gamma^{k} \frac{1-r_{d}}{2 r_{d}} \left( L(A) - 1 \right)
$$
which finally provides
$$
\E L(A_{k+1}) - 1 \leq \gamma^{k} \left( L(A) - 1 \right) \left( r_{d} + \frac{1-r_{d}}{2 r_{d}} \right) = \gamma^{k+1} \left( L(A) - 1 \right) ~.
$$
To conclude, it suffices to extend inequality (\ref{RecMstar}) from $A_{k}$ to $S_{n}A$. So, let $n\in\mathbb{N}$ larger than $m$. Let us introduce the integer $k\geq 0$ satisfying
\begin{equation}
\label{Largestk}
\mathbf{m_{k}} \leq n < \mathbf{m_{k+1}} ~.
\end{equation}
The choice of $k$ implies on the one hand,
\begin{eqnarray*}
\E L(S_{n}A) - 1 & \leq & \E L(S_{\mathbf{m_{k}}}A) - 1 \\
& = & \E L(A_{k}) - 1 \\
& \leq & \gamma^{k} \left( L(A) - 1 \right)
\end{eqnarray*}
by (\ref{RecMstar}). On the other hand, it allows to compare $k$ and $\sqrt{n}$. Indeed,
\begin{eqnarray*}
n & < & \mathbf{m_{k+1}} \\
& \leq & (k+2) m + \frac{(k+2)(k+1)}{2}(b+1) \\
& \leq & c (k+2)^{2} ~,
\end{eqnarray*}
for a suitable constant $c>0$, only depending on $m$ and $b$. This proves the upperbound of (\ref{VitesseMstar}) for any $n\geq m$, with $c_{1}=\gamma^{-2}(L(A)-1)$ and $c_{2}=-\frac{1}{\sqrt{c}}\log\gamma$. Finally, it suffices to increase $c_{1}$ in order to get (\ref{VitesseMstar}) for any $n$.
\end{proof}

\section{Open questions}
\label{sect:Open}

The first open question concerns the rate of convergence of the random sequence $(S_{n}A)_{n\geq 1}$ to the corresponding ball: how far from optimal the rate given by Theorem \ref{thm:Steiner} is ? However no (strict) contraction property for Steiner has been exhibited, one may expect an exponential rate.

Corollary 2 and Lemma 3.4 of \cite{BF} suggest that the a.s convergence of $(S_{n}A)_{n\geq 1}$ takes place, for i.i.d. directions, provided the support of the common distribution contains a nonempty open set of the sphere $\mathbb{S}^{d-1}$. Is this condition sufficient to receive an assessment of speed of convergence ?

What about the rate of convergence of $(S_{n}A)_{n\geq 1}$ and $(B_{n}A)_{n\geq 1}$ when $A$ is only assumed to be a compact set, or a set of finite measure ?

Is there exist a stronger theorem of equivalence ensuring some relation between the rates of convergence of both sequences $(S_{n}A)_{n\geq 1}$ and $(B_{n}A)_{n\geq 1}$ ?

The counter-example exhibited in \cite{BBGV} proves that an asimptotically  uniformly distributed non random sequence $(u_{n})_{n\geq 1}$ on $\mathbb{S}^{1}$ does not always round off any given convex body. It would be interesting to find a reasonable strengthening of this condition which implies an asymptotic rounding of any convex bodies.

\section{Acknowledgments}

The authors wish to thank all participants of working seminar on Stochastic Geometry of the university Lille 1 for their support and useful discussions.

\appendix
\section{Appendix: metrics on $\mathcal{K}_{d}$}
\label{appendix}

The Hausdorff distance provides a bridge between convex bodies and their support functions. Precisely, the mapping $\phi: A\mapsto f_A$ is an isometry from $(\mathcal{K}_d,\,d_H)$ onto the subset $\phi(\mathcal{K}_d)$ of the space of continuous functions on $\mathbb{S}^{d-1}$ endowed with the $L^{\infty}$ norm. See \cite{G} p.84.

\begin{lem}
\label{lem:HausdorffSupportFct}
Let $A,B\in\mathcal{K}_d$. Then,
$$
d_{H}(A , B) = \| f_A - f_B \|_{\infty} ~.
$$
\end{lem}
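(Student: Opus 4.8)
The plan is to translate the definition of $d_{H}$ entirely into the language of support functions, relying on two standard facts recalled (or immediately available) from \cite{G}: first, for convex bodies $K,K'\in\mathcal{K}_{d}$ one has $K\subset K'$ if and only if $f_{K}\leq f_{K'}$ pointwise on $\mathbb{S}^{d-1}$ (this is the monotone form of the characterization of a convex body by its support function, Theorem 4.3 p.57); second, the support function is additive under Minkowski sum, $f_{K\oplus K'}=f_{K}+f_{K'}$, and $f_{B(0,\varepsilon)}$ is the constant function $\varepsilon$ on $\mathbb{S}^{d-1}$ (Proposition 6.2 p.81). In fact the whole statement is just the quantitative version of the remark made in the Appendix that $A\mapsto f_{A}$ is an isometry of $(\mathcal{K}_{d},d_{H})$ onto its image for the $L^{\infty}$ norm, so one could simply cite \cite{G} p.84; below I spell out the short argument.

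First I would combine the two facts above to rewrite each of the two one-sided inclusions appearing in the definition of $d_{H}$. For $\varepsilon>0$,
$$
A\subset B\oplus B(0,\varepsilon)\iff f_{A}\leq f_{B}+\varepsilon\ \text{ on }\ \mathbb{S}^{d-1}\iff \sup_{\theta\in\mathbb{S}^{d-1}}\bigl(f_{A}(\theta)-f_{B}(\theta)\bigr)\leq\varepsilon .
$$
Taking the infimum over $\varepsilon>0$ then gives
$$
\inf\{\varepsilon>0\;|\;A\subset B\oplus B(0,\varepsilon)\}=\max\{0,\ \sup(f_{A}-f_{B})\},
$$
and by the symmetric computation
$$
\inf\{\varepsilon>0\;|\;B\subset A\oplus B(0,\varepsilon)\}=\max\{0,\ \sup(f_{B}-f_{A})\}.
$$
Finally I would take the maximum of these two quantities. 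Since
$$
\sup(f_{A}-f_{B})+\sup(f_{B}-f_{A})\geq \sup\bigl((f_{A}-f_{B})+(f_{B}-f_{A})\bigr)=0,
$$
at least one of the two suprema is nonnegative, so the spurious zeros drop out and
$$
d_{H}(A,B)=\max\{\sup(f_{A}-f_{B}),\ \sup(f_{B}-f_{A})\}=\sup_{\theta\in\mathbb{S}^{d-1}}|f_{A}(\theta)-f_{B}(\theta)|=\|f_{A}-f_{B}\|_{\infty},
$$
which is the claim.

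I do not expect any genuine obstacle here: the argument is a direct substitution once the two classical properties of support functions are invoked. The only mildly delicate point is that the infima in the definition of $d_{H}$ are taken over $\varepsilon>0$, so a priori each one-sided distance equals $\max\{0,\cdot\}$ rather than the relevant supremum itself; the elementary positivity inequality displayed above is exactly what is needed to see that this discrepancy is immaterial. If one prefers to avoid even this remark, one can instead note directly that $A\subset B\oplus B(0,\|f_{A}-f_{B}\|_{\infty})$ and $B\subset A\oplus B(0,\|f_{A}-f_{B}\|_{\infty})$, which gives $d_{H}(A,B)\leq\|f_{A}-f_{B}\|_{\infty}$, while the reverse inequality follows by picking, for each $\theta$, a point of $A$ or of $B$ realizing the support function in direction $\theta$.
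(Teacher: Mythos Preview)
Your argument is correct and is exactly the standard proof of this classical identity. Note that the paper itself does not prove this lemma at all: it simply records the statement and refers to \cite{G} p.84 (the remark preceding the lemma that $A\mapsto f_{A}$ is an isometry onto its image for the $L^{\infty}$ norm). So there is nothing to compare against; you have supplied the details behind the citation, and your handling of the $\max\{0,\cdot\}$ issue is clean.
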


The reason why the Nikod\'ym distance is used in this paper lies in the fact that Steiner symmetrization is $1-$Lipschitz with respect to it. See Lemma 2.2 of \cite{V}.

\begin{lem}
\label{lem:Nykodim1Lip}
Let $A,B\in\mathcal{K}_d$ and $u\in\mathbb{S}^{d-1}$. Then,
$$
d_N (S_uA, S_uB) \leq d_N (A, B) ~.
$$
\end{lem}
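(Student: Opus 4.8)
The plan is to reduce the $d$-dimensional statement to an elementary one-dimensional fact about intervals, via a Fubini decomposition along the direction $u$. After applying a rotation (under which both $d_N$ and $S_u$ behave equivariantly), one may assume $u=e_d$, so that points of $\mathbb{R}^d$ are written $(y,t)$ with $y\in u^\perp$ and $t\in\mathbb{R}$. For a convex body $K$ and $y\in u^\perp$, the fiber $K_y=\{t\in\mathbb{R}:(y,t)\in K\}$ is a (possibly empty) bounded interval by convexity; write $\ell_K(y)=\lambda^1(K_y)$. By Definition \ref{defi:Steiner}, $(S_uK)_y$ is exactly the interval centered at $0$ of length $\ell_K(y)$. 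The map $y\mapsto\ell_K(y)$ is concave on the projection of $K$ onto $u^\perp$, hence measurable, so Fubini's theorem applies.

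First I would record the one-dimensional inequality: if $I,J\subset\mathbb{R}$ are bounded intervals and $I^\ast,J^\ast$ denote the intervals centered at $0$ with $\lambda^1(I^\ast)=\lambda^1(I)$, $\lambda^1(J^\ast)=\lambda^1(J)$, then
\[
\lambda^1(I^\ast\Delta J^\ast)=\bigl|\lambda^1(I)-\lambda^1(J)\bigr|\le\lambda^1(I\Delta J).
\]
The equality is because two centered intervals are nested; the inequality follows from $\lambda^1(I\Delta J)=\lambda^1(I)+\lambda^1(J)-2\lambda^1(I\cap J)\ge\lambda^1(I)+\lambda^1(J)-2\min(\lambda^1(I),\lambda^1(J))$. (The degenerate cases where $I$ or $J$ is empty are covered with the convention $\emptyset^\ast=\emptyset$.)

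Then I would apply this fiberwise with $I=A_y$, $J=B_y$, noting $(S_uA)_y=I^\ast$ and $(S_uB)_y=J^\ast$, and integrate over $y\in u^\perp$:
\[
d_N(S_uA,S_uB)=\int_{u^\perp}\lambda^1\bigl((S_uA)_y\Delta(S_uB)_y\bigr)\,dy=\int_{u^\perp}\bigl|\ell_A(y)-\ell_B(y)\bigr|\,dy\le\int_{u^\perp}\lambda^1(A_y\Delta B_y)\,dy=d_N(A,B).
\]
The argument is essentially routine; the only points deserving a word of care are the reduction to $u=e_d$, the convexity remark that fibers are intervals, and the measurability needed for Fubini. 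I do not expect a genuine obstacle, and the write-up should be short.
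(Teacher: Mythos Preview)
Your argument is correct. The paper does not actually supply a proof of this lemma; it simply refers the reader to Lemma~2.2 of Vol\v{c}i\v{c}~\cite{V}. Your fiberwise Fubini reduction to the one-dimensional interval inequality $\lambda^1(I^\ast\Delta J^\ast)=|\lambda^1(I)-\lambda^1(J)|\le\lambda^1(I\Delta J)$ is the standard route and is exactly how the result is proved in that reference (though Vol\v{c}i\v{c} states it more generally for measurable sets of finite measure, where the fibers need not be intervals and one invokes the nonexpansiveness of the one-dimensional symmetric decreasing rearrangement in $L^1$ instead of the bare interval computation). For the convex-body case at hand your simplification via convexity of the fibers is perfectly adequate, and the measurability needed for Fubini is immediate since $\ell_K(y)=\int_{\mathbb{R}}\mathbf{1}_K(y,t)\,dt$.
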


This appendix ends with two inequalities comparing the Hausdorff and Nikod\'ym distances.

\begin{lem}
\label{lem:Nyk<Hausdorff}
There exists a positive constant $C=C(d,R)$ such that for all $A,B\in\mathcal{K}_d$ included in the ball $B(0,R)$:
$$
d_N(A,B) \leq C d_H(A,B).
$$
\end{lem}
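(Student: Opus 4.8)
The plan is to bound the volume of the symmetric difference $A\Delta B$ directly in terms of $\varepsilon:=d_H(A,B)$. By the very definition of the Hausdorff distance, $A\subset B\oplus B(0,\varepsilon)$ and $B\subset A\oplus B(0,\varepsilon)$, hence $A\setminus B\subset\bigl(B\oplus B(0,\varepsilon)\bigr)\setminus B$ and $B\setminus A\subset\bigl(A\oplus B(0,\varepsilon)\bigr)\setminus A$. Therefore
$$
d_N(A,B)=\lambda^d(A\setminus B)+\lambda^d(B\setminus A)\leq\bigl(\lambda^d(B\oplus B(0,\varepsilon))-\lambda^d(B)\bigr)+\bigl(\lambda^d(A\oplus B(0,\varepsilon))-\lambda^d(A)\bigr),
$$
so it suffices to show that the outer parallel shell of a convex body $K\subset B(0,R)$ of width $\varepsilon$ has volume at most $C(d,R)\,\varepsilon$, with $C(d,R)>0$ depending only on $d$ and $R$.

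For this I would invoke the Steiner formula: for any convex body $K$ and $t\geq 0$ one has $\lambda^d(K\oplus B(0,t))=\sum_{j=0}^d\binom{d}{j}W_j(K)\,t^j$, where the $W_j(K)$ are the quermassintegrals of $K$ and $W_0(K)=\lambda^d(K)$. Thus $\lambda^d(K\oplus B(0,\varepsilon))-\lambda^d(K)=\sum_{j=1}^d\binom{d}{j}W_j(K)\,\varepsilon^j$. Since $A,B\subset B(0,R)$ we have $\varepsilon=d_H(A,B)\leq 2R$, and the quermassintegrals are monotone under inclusion, so $W_j(K)\leq W_j(B(0,R))=:w_j(d,R)$ for every $j$. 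Using $\varepsilon^j\leq(2R)^{j-1}\varepsilon$ for $1\leq j\leq d$ we obtain
$$
\lambda^d(K\oplus B(0,\varepsilon))-\lambda^d(K)\leq\varepsilon\sum_{j=1}^d\binom{d}{j}w_j(d,R)(2R)^{j-1},
$$
the sum being a constant depending only on $d$ and $R$. Applying this to $K=A$ and $K=B$ and plugging into the first display gives $d_N(A,B)\leq C(d,R)\,d_H(A,B)$.

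An alternative, quermassintegral-free route is to write the shell volume as $\int_0^\varepsilon\mathrm{Per}(K\oplus B(0,t))\,dt$ (the perimeter being the derivative of $t\mapsto\lambda^d(K\oplus B(0,t))$) and to use that, among convex bodies, the perimeter is monotone under inclusion; since $K\oplus B(0,t)\subset B(0,3R)$ for $t\leq\varepsilon\leq 2R$, this bounds the shell volume by $\varepsilon\,\mathrm{Per}(B(0,3R))$, which again is $C(d,R)\,\varepsilon$. There is no genuine obstacle here; the only point that needs a little care is to keep the estimate \emph{linear} in $\varepsilon$ rather than a polynomial in $\varepsilon$, and this is precisely where the a priori bound $\varepsilon\leq 2R$ — a consequence of the hypothesis that both bodies sit inside $B(0,R)$ — is used. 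The statement is, in essence, a quantitative version of the already recalled fact that $d_N$ and $d_H$ induce the same topology, made uniform over a bounded family.
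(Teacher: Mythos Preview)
Your argument is correct. Both routes you give --- via the Steiner formula and via the perimeter integral --- are valid, and your handling of the linearity in $\varepsilon$ using the a priori bound $\varepsilon\le 2R$ is exactly the right observation.

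The paper's proof reaches the same conclusion by a more elementary device that avoids quermassintegrals altogether. Instead of dilating by a ball, it replaces $B(0,\varepsilon)$ by the cube $[-\varepsilon,\varepsilon]^d=\Delta_1\oplus\cdots\oplus\Delta_d$ (a Minkowski sum of $d$ axis-parallel segments of length $2\varepsilon$) and telescopes: setting $B_i=B\oplus\Delta_1\oplus\cdots\oplus\Delta_i$, one has $\lambda^d(A\setminus B)\le\sum_{i=0}^{d-1}\lambda^d(B_{i+1}\setminus B_i)$, and each slab $B_{i+1}\setminus B_i$ has volume at most $2\varepsilon$ times a $(d-1)$-dimensional cross-sectional area bound of order $(R+\sqrt d\,\varepsilon)^{d-1}$. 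Your Steiner-formula approach is cleaner and immediately identifies the constant in terms of the quermassintegrals of $B(0,R)$; the paper's approach trades that conceptual transparency for being entirely self-contained (no appeal to the Steiner formula or to monotonicity of intrinsic volumes). Either is perfectly adequate for the lemma.
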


\begin{proof}
Let us denote by $\Delta_{1},\ldots,\Delta_{d}$ the one-dimensional segments, centered and with length $2\varepsilon$ along the vectors of the canonical basis of $\mathbb{R}^{d}$:
$$
[-\varepsilon,\varepsilon]^{d} = \Delta_{1}\oplus\ldots\oplus\Delta_{d} ~.
$$
Let us set $B_{i}=B\oplus\Delta_{1}\oplus\ldots\oplus\Delta_{i}$ for $1\leq i\leq d$ and $B_{0}=B$. Whenever $\varepsilon>d_H(A,B)$, we can write:
\begin{equation}
\label{Bi+1}
\lambda^{d}(A\setminus B) \leq \lambda^{d}(B_{d}\setminus B) \leq \sum_{i=0}^{d-1} \lambda^{d}(B_{i+1}\setminus B_{i}) ~.
\end{equation}
Now, $B_{i}$ is a convex set included in the centered (euclidean) ball with radius $R+\sqrt{d}\varepsilon$. So, its $(d-1)-$dimensional volume is smaller than $d \kappa_{d} (R+\sqrt{d}\varepsilon)^{d-1}$, where $\kappa_{d}$ denotes the volume of the $d-$dimensional unit ball. Hence,
$$
\lambda^{d}(B_{i+1}\setminus B_{i}) \leq 2\varepsilon d \kappa_{d} (R+\sqrt{d}\varepsilon)^{d-1} ~.
$$
Since $d_H(A,B)<2R$, we can take $\varepsilon<2R$. Then, $\lambda^{d}(B_{i+1}\setminus B_{i})$ is upperbounded by $C \varepsilon$ where $C=C(d,R)>0$. By (\ref{Bi+1}),
$$
\lambda^{d}(A\setminus B) \leq d C \varepsilon ~.
$$
The same inequality holds for $\lambda^{d}(B\setminus A)$. The searched result follows when $\varepsilon\searrow d_H(A,B)$.
\end{proof}

\begin{lem}
\label{lem:Hausdorff<Nyk}
Let $A$ be a convex body having the same volume as $D$ and such that $d_{H}(A,D)\leq\frac{1}{2}$. Then there exists a positive constant $C=C(d)$ such that
$$
d_H(A,D) \leq C d_N(A,D)^{\frac{2}{d+1}} ~. 
$$ 
\end{lem}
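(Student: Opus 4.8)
The plan is to relate the Hausdorff distance $d_H(A,D)$ to the volume of the symmetric difference $A\triangle D$ by a geometric argument exploiting convexity. First I would observe that since $d_H(A,D)\leq\tfrac12$, the body $A$ contains the ball $\tfrac12 D$ and is contained in $\tfrac32 D$; in particular $A$ has bounded circumradius and a definite inradius. Set $\delta = d_H(A,D)$. Since $\mathrm{vol}(A)=\mathrm{vol}(D)=v_d$, the body $A$ cannot be contained in $(1-\delta)D$ nor contain $(1+\delta)D$, so there is a boundary point of $A$ at distance roughly $\delta$ outside $D$ (or inside, but the two cases are symmetric, so I treat the ``outside'' one). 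Precisely, there exists $p\in A$ with $\|p\|\geq 1+\delta/2$ or a direction in which $D\not\subset A\oplus B(0,\delta/2)$; either way we locate a point witnessing a ``bump'' of height of order $\delta$.

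The key step is then to show that such a bump forces $\lambda^d(A\triangle D)\gtrsim \delta^{(d+1)/2}$. Take the witnessing point $p$, with $\|p\|=1+t$ for some $t\geq \delta/4$ (say), lying in $A\setminus D$. By convexity, $A$ contains the convex hull of $p$ together with the ``cap'' of $D$ that $A$ must contain near the projection of $p$ onto $\partial D$ — more carefully, $A$ contains $\mathrm{conv}(\{p\}\cup C)$ where $C$ is a $(d-1)$-dimensional disk in $D$ centered near $p/\|p\|$; using the inradius bound one checks $A\supset \tfrac12 D$, so such a disk of radius comparable to a small constant times $\sqrt{t}$ sits inside both $A$ and $D$ at the right ``latitude.'' The cone-like region $\mathrm{conv}(\{p\}\cup C)\setminus D$ then has $d$-dimensional volume bounded below by a constant times (height $t$) $\times$ (base radius)$^{d-1}$, and optimizing/estimating the base radius as a constant multiple of $\sqrt{t}$ gives volume $\gtrsim t\cdot t^{(d-1)/2}=t^{(d+1)/2}\gtrsim \delta^{(d+1)/2}$. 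Since this region lies in $A\setminus D$, we get $d_N(A,D)\geq \lambda^d(A\setminus D)\geq c(d)\,\delta^{(d+1)/2}$, which rearranges to the claimed $d_H(A,D)\leq C d_N(A,D)^{2/(d+1)}$. The case where the bump is ``inward'' (a point of $D\setminus A$ at depth $\gtrsim\delta$) is handled by the same construction with the roles of inside and outside exchanged, again producing a region of volume $\gtrsim\delta^{(d+1)/2}$ inside $D\setminus A$.

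The main obstacle I anticipate is making the geometric lower bound on the volume of the ``spherical cap cut by a bump'' rigorous and uniform in $A$: one must show that whenever a convex body sandwiched between $\tfrac12 D$ and $\tfrac32 D$ sticks out of $D$ by height $t$ at some point, the convex hull with an inscribed equatorial-type disk of radius $\sim\sqrt{t}$ genuinely lies in $A$ and genuinely leaves $D$ with the asserted volume. The $\sqrt{t}$ scaling comes from the curvature of the sphere (the chord of a cap of height $t$ has half-length $\sim\sqrt{t}$), and the convexity of $A$ together with the fixed inradius is what guarantees enough ``room'' for the disk; quantifying the constant $C(d)$ is routine once this picture is pinned down. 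An alternative, slightly cleaner route avoiding case analysis is to argue directly: let $\theta\in\mathbb{S}^{d-1}$ achieve (up to a factor) the Hausdorff distance, reduce by rotational symmetry of $D$ to a $2$-dimensional picture in the plane spanned by $\theta$ and a point of $\partial A$, and integrate the one-dimensional ``gap'' between $\partial A$ and $\partial D$ over a solid angle of size $\sim\delta^{(d-1)/2}$ on which, by convexity, the gap stays $\gtrsim\delta$; this yields the same exponent $(d+1)/2$ with less casework.
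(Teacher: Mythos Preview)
Your approach is essentially the same as the paper's: locate a point $a\in A$ with $\|a\|=1+r$ (where $r=d_H(A,D)$), use convexity together with the inclusion $(1-r)D\subset A$ (which follows from $d_H(A,D)=r$ via support functions) to trap a cone of height $r$ and base radius $\sim\sqrt{r}$ inside $A\setminus D$, and conclude $d_N(A,D)=2\lambda^d(A\setminus D)\gtrsim r^{(d+1)/2}$. The paper's write-up is a bit crisper---it takes the cone from $a$ through $(1-r)D$ intersected with the outer tangent half-space to $D$ at $a/\|a\|$, which makes the height-$r$/base-$\sqrt{r}(1-r)/2\geq\sqrt{r}/4$ computation immediate---and it avoids your unnecessary slack (you may take $t=\delta$ exactly, and $(1-r)D\subset A$ rather than just $\tfrac12 D\subset A$), but the geometric content is identical.
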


\begin{proof}
Let $r=d_H(A,D)$. There exists a vector $a\in A$ such that $\|a\|_{2}=1\pm r$. We only treat the case $\|a\|_{2}=1+r$ since the case $\|a\|_{2}=1-r$ is similar. Let us consider the semi-infinite cone $K_{1}$ formed by all rays emanating from $a$ and intersecting the ball $(1-r)D$, and the outer half-space $K_{2}$ which is tangent to $D$ at $a/\|a\|_{2}$. An elementary calculation shows the set $K_1\cap K_2$ is a right cone with height $r$ over a $(d-1)-$dimensional ball with radius larger than $\sqrt{r}/4$ (because $r\leq 1/2$). Hence, the volume of $A\setminus D$ which contains $K_1\cap K_2$, is larger than $Cr^{\frac{d+1}{2}}$ where $C=C(d)$ is a positive constant. To conclude, we use the identity
$$
d_{N}(A,D) = 2 \lambda^{d}(A\setminus D)
$$
since $A$ and $D$ have the same volume.
\end{proof}

{\small \bibliographystyle{plain}
\bibliography{SteinerBibli}}

\begin{thebibliography}{10}

\bibitem{BBGV}
G.~Bianchi, A.~Burchard, P.~Gronchi, and A.~Vol\v{c}i\v{c}.
\newblock Convergence in shape of steiner symmetrizations.
\newblock arXiv:1206.2041v1.

\bibitem{BKLYZ}
G.~Bianchi, D.~A. Klain, E.~Lutwak, D.~Yang, and G.~Zhang.
\newblock A countable set of directions is sufficient for {S}teiner
  symmetrization.
\newblock {\em Adv. in Appl. Math.}, 47(4):869--873, 2011.

\bibitem{BH}
J.~Bokowski and E.~Heil.
\newblock Integral representations of quermassintegrals and {B}onnesen-style
  inequalities.
\newblock {\em Arch. Math. (Basel)}, 47(1):79--89, 1986.

\bibitem{BF}
A.~Burchard and M.~Fortier.
\newblock Convergence of random polarizations.
\newblock arXiv:1104.4103v1.

\bibitem{G}
P.~M. Gruber.
\newblock {\em Convex and discrete geometry}, volume 336 of {\em Grundlehren
  der Mathematischen Wissenschaften [Fundamental Principles of Mathematical
  Sciences]}.
\newblock Springer, Berlin, 2007.

\bibitem{Klain}
D.~A. Klain.
\newblock Steiner symmetrization using a finite set of directions.
\newblock {\em Adv. in Appl. Math.}, 48(2):340--353, 2012.

\bibitem{K}
B.~Klartag.
\newblock Rate of convergence of geometric symmetrizations.
\newblock {\em Geom. Funct. Anal.}, 14(6):1322--1338, 2004.

\bibitem{ML}
P.~Mani-Levitska.
\newblock Random {S}teiner symmetrizations.
\newblock {\em Studia Sci. Math. Hungar.}, 21(3-4):373--378, 1986.

\bibitem{MT}
S.~Meyn and R.~L. Tweedie.
\newblock {\em Markov chains and stochastic stability}.
\newblock Cambridge University Press, Cambridge, second edition, 2009.
\newblock With a prologue by Peter W. Glynn.

\bibitem{M}
C.~M{\"u}ller.
\newblock {\em Spherical harmonics}, volume~17 of {\em Lecture Notes in
  Mathematics}.
\newblock Springer-Verlag, Berlin, 1966.

\bibitem{S}
R.~Schneider.
\newblock {\em Convex bodies: the {B}runn-{M}inkowski theory}, volume~44 of
  {\em Encyclopedia of Mathematics and its Applications}.
\newblock Cambridge University Press, Cambridge, 1993.

\bibitem{V}
A.~Vol\v{c}i\v{c}.
\newblock Random {S}teiner symmetrizations of sets and functions.
\newblock {\em Calc. Var.}, 2012.

\end{thebibliography}

\end{document}